\documentclass[11pt]{amsart}
\usepackage{latexsym, mathrsfs, color, tikz, multirow, bbm, mathtools}
\usepackage[vcentermath]{youngtab}
\usepackage{young}
\usepackage{graphics}
\usepackage{subcaption}
\input{xy}
\xyoption{all}

\usepackage[colorlinks=true, pdfstartview=FitV,
 linkcolor=black,citecolor=black,urlcolor=black]{hyperref}
 
\usepackage{tikz}

\setlength{\textwidth}{460pt} \setlength{\hoffset}{-45pt}

\numberwithin{equation}{section}
\theoremstyle{plain}
\newtheorem{Thm}[equation]{Theorem}

\newtheorem{Cor}[equation]{Corollary}
\newtheorem{Lem}[equation]{Lemma}

\theoremstyle{definition}
\newtheorem{Def}[equation]{Definition}
\newtheorem{Exa}[equation]{Example}
\newtheorem{Rmk}[equation]{Remark}

\newenvironment{red}{\relax\color{red}}{\relax}
\newenvironment{blue}{\relax\color{blue}}{\hspace*{.5ex}\relax}

\newcommand{\ber}{\begin{red}}
\newcommand{\er}{\end{red}}
\newcommand{\beb}{\begin{blue}}
\newcommand{\eb}{\end{blue}}

\begin{document}

\title[Reverse Permutations in the RSK Correspondence]{Permutations whose Reverse Shares the Same Recording Tableau in the RSK correspondence}

\author[T. J. Ervin]{Tucker J. Ervin}
\email{tjervin@crimson.ua.edu}

\author[B. Jackson]{Blake Jackson}
\email{bajackson9@crimson.ua.edu}

\author[J. Lane]{Jay Lane}
\email{Jay.D.Lane-1@ou.edu}

\author[K. Lee]{Kyungyong Lee}
\email{klee94@ua.edu; klee1@kias.re.kr}

\author[S. D. Nguyen]{Son Dang Nguyen}
\email{sdnguyen1@crimson.ua.edu}

\author[J. O'Donohue]{Jack O'Donohue}
\email{jro5347@psu.edu}

\author[M. Vaughan]{Michael Vaughan}
\email{Michael.Vaughan@briarcliff.edu}
\thanks{All authors were supported by the University of Alabama. KL was supported by the NSF grant DMS-2042786 and Korea Institute for Advanced Study. }

\begin{abstract}
The RSK correspondence is a bijection between permutations and pairs of standard Young tableaux with identical shape, where the tableaux are commonly denoted $P$ (insertion) and $Q$ (recording). 
It has been an open problem to demonstrate
\begin{center}
\begin{equation*}
|\{w \in \mathfrak{S}_n | \, Q(w) = Q(w^r)\}|
=
\begin{dcases}
	2^{\frac{n-1}{2}}{n-1 \choose \frac{n-1}{2}} & n \text{ odd} \\
	0 & n \text{ even} \\
\end{dcases},
\end{equation*}
\end{center}
where $w^r$ is the reverse permutation of $w$.
First we show that for each $w$ where $Q(w) = Q(w^r)$ the recording tableau $Q(w)$ has a symmetric hook shape and satisfies a certain simple property.
From these two results, we succeed in proving the desired identity.

\end{abstract}

\maketitle

\section{Introduction}

First described in 1938 by Robinson \cite{Ro}, the bijection began as the Robinson--Schensted (RS) correspondence, and Robinson used it in an attempt to prove the Littlewood--Richardson rule. 
In 1961, Schensted \cite{Sc} gave a much simpler description of the algorithm, and it is the one we will use throughout our paper. 
Even though the two descriptions are very different, the correspondence usually credits both authors. 
Knuth \cite{K} extended the RS correspondence in 1970 to one between non-negative integer matrices and semi-standard Young tableaux. 
The generalized bijection is referred to as the Robinson--Schensted--Knuth (RSK) correspondence. 
While Knuth's formulation and results are important to the theory and have been widely used, we keep our focus solely on permutations, which we write using one line notation. 

Given a permutation $w$, there are three operations we can perform: the reverse $w^r$, the complement $w^c$, and the inverse $w^{-1}$.
\begin{Def} \label{def-permutations}
Let $w = w_1\dots w_n \in \mathfrak{S}_n$. Then we define the {\bf reverse permutation} $w^r = w_n\dots w_1$, the {\bf complement permutation} $w^c = (n+1-w_1)\dots (n+1-w_n)$, and the {\bf reverse-complement permutation} $w^{rc} = w^{cr} = (n+1-w_n)\dots (n+1-w_1)$.
\end{Def}
There exist several relations between the recording and insertion tableaux of $w$ and its image under the three operations.
The insertion tableaux of $w$ and $w^r$ are transposes of each other, written as $P(w) = P(w^r)^T$ \cite[Theorem 3.2.3]{Sa}.
Similarly, the recording tableaux are related by $Q(w) = \epsilon(Q(w^r))^T$ \cite[Theorem 3.9.4]{Sa}, where $\epsilon$ is the evacuation map.
Other relations exist with regards to the inverse and complement operations, such as $P(w) = Q(w^{-1})$, $Q(w) = P(w^{-1})$, $P(w) = \epsilon(P(w^c))^T$, and $Q(w) = Q(w^c)^T$ \cite[Theorem 4.1.1]{L}.
To summarize, the RSK correspondence takes $w$ and its images under the operations to the following:
$$RSK(w) = (P(w), Q(w))$$ 
$$RSK(w^c) = (\epsilon(P(w))^T, Q(w)^T)$$
$$RSK(w^r) = (P(w)^T, \epsilon(Q(w))^T)$$
$$RSK(w^{rc}) = (\epsilon(P(w)), \epsilon(Q(w)))$$
$$RSK(w^{-1}) = (Q(w),P(w))$$
$$RSK(w^{-1c}) = (\epsilon(Q(w))^T, P(w)^T)$$
$$RSK(w^{-1r}) = (Q(w)^T, \epsilon(P(w))^T)$$
$$RSK(w^{-1rc}) = (\epsilon(Q(w)), \epsilon(P(w))).$$
Every combination of the three operations reduces to one of the eight options above.
This then brings two interesting questions: what kind of and how many permutations have their recording tableaux fixed by these combinations?

For the complement, there are no non-trivial permutations such that $Q(w) = Q(w^c)$, as $Q(w^c) = Q(w)^T$.
The only possible such permutation is $1 \in \mathfrak{S}_1$.
The set of permutations such that $Q(w) = Q(w^{-1})$ is the set of involutions of $\mathfrak{S}_n$.
Its cardinality is given by $\sum_{\lambda \, \vdash n} f^\lambda$, where $f^\lambda$ is the number of standard Young tableaux of shape $\lambda$.
As $Q(w) = Q(w^{-1 c})$ only when $Q(w) = P(w)^T$, its cardinality is the sum of all $f^{\lambda'}$, where $\lambda'$ is a shape of size $n$ preserved by transposition.
Additionally, it is straightforward to show that the sets of permutations where $Q(w) = Q(w^{-1 rc})$ or $Q(w) = Q(w^{-1 r})$ are respectively equal to the previous two sets under the reverse operation.
This leaves only permutations which have fixed recording tableaux under the reverse and the reverse complement maps to count.

The question of what permutations have fixed recording tableaux under the reverse map may have been long-posed, but we first encountered the problem when using Jeremy L. Martin's ``Lecture Notes on Algebraic Combinatorics'' \cite[Exercise 9.8(b)]{M}.
In this paper we describe and count the permutations, $w$, such that $Q(w) = Q(w^r)$.
Our main theorem is as follows.

\begin{Thm} \label{main-theorem}
Let $w \in \mathfrak{S}_n$.
Then $Q(w) = Q(w^r)$ if and only if $Q(w)$ satisfies both of the following properties:
\begin{itemize}

\item $Q(w)$ has a symmetric hook shape,

\item The element $i$ in the first row of $Q(w)$ implies that $n-i+2$ belongs to the first column of $Q(w)$ for all $i \in [n]$ with $i > 1$.

\end{itemize}
In particular, we have the formula
$$|\{w \in \mathfrak{S}_n | \, Q(w) = Q(w^r)\}| =
\begin{dcases}
	2^{\frac{n-1}{2}}{n-1 \choose \frac{n-1}{2}} & n \text{ odd} \\
	0 & n \text{ even} \\
\end{dcases}.$$
\end{Thm}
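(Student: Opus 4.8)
The plan is to translate the condition $Q(w)=Q(w^r)$ into a statement about $Q(w)$ alone and then count the admissible recording tableaux. Since $Q(w^r)=\epsilon(Q(w))^T$ and transposition is an involution, the identity $Q(w)=Q(w^r)$ is equivalent to $\epsilon(Q(w))=Q(w)^T$, a condition depending only on $Q(w)$. Because the RSK correspondence is a bijection, for any fixed standard Young tableau $Q$ there are exactly $f^{\mathrm{sh}(Q)}$ permutations with recording tableau $Q$ (one for each choice of insertion tableau of the same shape). Hence
\[
|\{w\in\mathfrak{S}_n \mid Q(w)=Q(w^r)\}| \;=\; \sum_{Q\,:\,\epsilon(Q)=Q^T} f^{\mathrm{sh}(Q)},
\]
the sum running over standard Young tableaux of size $n$ with $\epsilon(Q)=Q^T$. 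As $\epsilon$ preserves the shape while transposition conjugates it, every such $Q$ has self-conjugate shape $\lambda=\lambda'$; this is the first thing I would record.

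Next I would prove the decisive structural fact that $\lambda$ must in fact be a hook, which I expect to be the main obstacle. Self-conjugacy alone does not suffice: self-conjugate non-hooks such as $(2,2)$ or $(3,3,3)$ exist, and one must show that none of them carries a tableau with $\epsilon(Q)=Q^T$. The reduction above shows that $\epsilon$ interchanges $Q$ and $Q^T$, so every diagonal entry of $Q$ is fixed by evacuation; I would exploit this by tracking the jeu-de-taquin trajectories in the evacuation of $Q$ and showing that a Durfee square of size at least $2$ (equivalently a cell at $(2,2)$, which self-conjugacy forces whenever $\lambda$ is not a hook) is incompatible with the fixed-diagonal requirement. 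Equivalently, one may argue directly on the permutation side, comparing the Schensted insertions of $w$ and $w^r$ step by step and using Greene's theorem to exclude any part beyond the first row and first column. Since a self-conjugate hook $(k+1,1^{k})$ has size $2k+1$, no such shape exists when $n$ is even, which already yields the value $0$ in that case.

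Finally, on a hook I would pin down the admissible $Q$ using descent sets, which behave transparently here. Writing $L$ for the set of entries in the first column below the corner and $A=\{2,\dots,n\}\setminus L$ for the entries in the first row beyond the corner, one checks that $\mathrm{Des}(Q)=\{\ell-1:\ell\in L\}$, so a hook tableau is determined by its descent set. Using the two standard facts $\mathrm{Des}(\epsilon(Q))=\{n-i:i\in\mathrm{Des}(Q)\}$ and $\mathrm{Des}(Q^T)=[n-1]\setminus\mathrm{Des}(Q)$, the equality $\epsilon(Q)=Q^T$ becomes, after translating back through the leg-to-descent dictionary, the condition that the involution $\sigma\colon i\mapsto n-i+2$ carries $A$ onto $L$. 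This is exactly the stated property: each $i>1$ in the first row forces $n-i+2$ into the first column, and the reverse inclusion follows by a cardinality count. For $n=2k+1$ the map $\sigma$ is a fixed-point-free involution on $\{2,\dots,n\}$ with $k$ orbits $\{j,\,n-j+2\}$, and $\sigma(A)=L$ holds precisely when exactly one element of each orbit lies in $A$; there are therefore $2^{k}$ admissible tableaux. Each has shape $(k+1,1^{k})$ with $f^{(k+1,1^{k})}=\binom{2k}{k}=\binom{n-1}{(n-1)/2}$, so the sum equals $2^{k}\binom{n-1}{(n-1)/2}=2^{(n-1)/2}\binom{n-1}{(n-1)/2}$, which completes the count and confirms that the two properties characterize the admissible recording tableaux.
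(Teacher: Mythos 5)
Your opening reduction is sound and your hook-case analysis is actually a nice alternative to the paper's: translating $Q(w)=Q(w^r)$ into $\epsilon(Q)=Q^T$, counting via $\sum_Q f^{\mathrm{sh}(Q)}$ by RSK, and then, on hooks, encoding the tableau by its descent set and using $\mathrm{Des}(\epsilon(Q))=\{n-i : i\in\mathrm{Des}(Q)\}$ together with $\mathrm{Des}(Q^T)=[n-1]\setminus\mathrm{Des}(Q)$ recovers exactly the paper's first-row/first-column condition (their Lemma \ref{first-row-M}, proved there instead by tracking which cell is vacated when passing from $\Delta^{i-2}P$ to $\Delta^{i-1}P$), and your orbit count $2^{(n-1)/2}$ and $f^{(k+1,1^k)}=\binom{2k}{k}$ match Lemma \ref{size-M} and Theorem \ref{proof-conj-count}. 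However, the decisive structural step --- that $\epsilon(Q)=Q^T$ forces a hook, i.e.\ that no self-conjugate shape with Durfee square of size at least $2$ carries such a tableau --- is exactly where your proposal stops being a proof. You correctly flag it as the main obstacle, but then offer only two unexecuted strategies: a fixed-diagonal/jeu-de-taquin trajectory analysis, or a Greene's-theorem comparison of the insertions of $w$ and $w^r$. Neither is carried out, and neither is routine. In particular, the fixed-diagonal observation alone is far from a contradiction: for the shape $(2,2)$ both standard tableaux are fixed by evacuation outright, diagonals included, and they fail $\epsilon(Q)=Q^T$ for other reasons, so more than the diagonal must be exploited.

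This missing step is precisely what the paper spends its entire technical apparatus on: it constructs the maps $\phi_{a,b}:\mathfrak{S}_n\to\mathfrak{S}_{n+2}$ and their common left inverse $\theta_n$, proves $\theta_n(R_{n+2})\subseteq R_n$ (Lemma \ref{theta-preserves-R}, via $\Delta Q(w)=Q(w_2\cdots w_{n+2})$ and compatibility of $\theta_n$ with reversal, Lemma \ref{theta-r-c}), shows the images of the $\phi_{a,b}$ partition $\mathfrak{S}_{n+2}$ so that $R_{n+2}\subseteq\bigcup\phi_{a,b}(R_n)$ (Lemma \ref{phi-preserves-R}), and shows that $\phi_{a,b}$ applied to an element of $H_n$ either lands in $H_{n+2}$ or produces a non-symmetric recording tableau (Lemma \ref{phi-preserves-symmetry}, a longest increasing/decreasing subsequence argument); a two-track induction on odd and even $n$ then yields $R_n\subseteq H_n$ (Theorem \ref{proof-conj-hooks}). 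Note also that your even-$n$ conclusion quietly depends on this same gap: for even $n$ self-conjugate non-hook shapes such as $(2,2)$ exist, so the observation that no self-conjugate hook has even size does not by itself give the value $0$ --- you still need the Durfee-square exclusion. Until you supply an actual proof of that exclusion (or some equivalent of the paper's induction), your proposal is a correct reduction plus a correct count, conditional on the paper's hardest theorem.
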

A forthcoming paper will attempt to answer his additional question of what permutations satisfy $Q(w) = Q(w^{rc})$.

As for the structure of this paper, Section \ref{background} covers background, notation, and definitions needed throughout the paper.
Section \ref{theta} introduces a family of maps $\Phi_n$ and a function $\theta_n$ which map symmetric groups to ``neighboring'' symmetric groups.
These maps further allow us to prove Theorem \ref{main-theorem} in Section \ref{proof-conjectures}.

\noindent \textit{Acknowledgements.} We thank Nick Loehr, Jeremy Martin, Bruce Sagan, and Richard Stanley for their correspondence and insight on earlier drafts. 

\section{Background, Notation, and Definitions} \label{background}
%

As the RSK correspondence is a bijection from permutations to standard Young tableaux, we begin by fixing our notation for permutations.
We write elements of the symmetric group on $n$ letters in one-line notation so that 
$$\mathfrak{S}_n=\{(w_1,\dots,w_n)\in \mathbb{Z}^n \ : \ \{w_1,\dots,w_n\}=[n] \}.$$ 
In other words, $w$ represents the permutation 
$$\sigma = \begin{pmatrix}
1 & 2 & \cdots & n \\
w_1 & w_2 & \cdots & w_n
\end{pmatrix}.$$
For convenience and conciseness we write $w = w_1\dots w_n$ for an element of $\mathfrak{S}_n$.
We can now describe the method by which we turn a permutation into a pair of tableaux.

\begin{Def} \label{row-insertion}
Let $T$ be a column strict tableau and let $x$ be a positive integer.
Then the {\bf Schensted insertion} or {\bf row-insertion  algorithm} $T \leftarrow x$ is defined as follows:
\begin{itemize}
	\item If $T = \emptyset$, then $T \leftarrow x = \young(x)$.
	\item If $x \geq u$ for all entries $u$ in the top row of $T$, then append $x$ to the end of the top row of $T$.
	\item Otherwise, find the leftmost entry $u$ such that $x < u$. Replace $u$ with $x$, and then perform the row-insertion with $u$ in the subtableau consisting of the second and succeeding rows. In this case, we say $x$ {\bf bumps} $u$.
	\item Repeat until the bumping stops. 
\end{itemize}

To obtain the promised pair of standard Young tableaux, we let $P(w)$ be the {\bf insertion tableau} given by $((\emptyset \leftarrow w_1) \leftarrow w_2) \leftarrow \cdots \leftarrow w_n$ and let the {\bf recording tableau} $Q(w)$ be the standard tableau of the same shape as $P(w)$ that records where the new box appears in the underlying Young diagram at each step. 
The RSK correspondence is the map $w \mapsto (P(w),Q(w))$.
\end{Def}

That was all a little bit dense; let us look at an example.

\begin{Exa} \label{exa-RSK}
Consider $52314 \in \mathfrak{S}_5$.\\
\underline{Step 1:} The initial tableau is empty.
$$ P = \young(5) \qquad Q = \young(1)$$
\underline{Step 2:} 2 bumps 5.
$$ P = \young(2,5) \qquad Q = \young(1,2)$$
\underline{Step 3:} 3 appends to the first row.
$$ P = \young(23,5) \qquad Q = \young(13,2)$$
\underline{Step 4:} 1 bumps 2, 2 bumps 5.
$$ P = \young(13,2,5) \qquad Q = \young(13,2,4)$$
\underline{Step 5:} 4 appends to the first row.
$$ P = \young(134,2,5) \qquad Q = \young(135,2,4)$$
\end{Exa}

One can easily check that $w = 52314$ satisfies $Q(w) = Q(w^r)$.
We next define the evacuation process.

\begin{Def}[{\cite[Definition 3.7.2]{Sa}}] \label{def-jeu-de-taquin}
A forward \textbf{jeu de taquin slide} of a skew tableau of shape $\lambda / \mu$ is given by:

\begin{itemize}

\item Pick $\alpha$ to be an inner corner of $\mu$.

\item While $\alpha$ is not an inner corner of $\lambda$ do:

\begin{itemize}

\item If $\alpha = (i,j)$, let $\alpha'$ be the cell of min$\{P_{i+l,j}, P_{i,j+1}\}$.

\item Slide $P_{\alpha'}$ into cell $\alpha$ and let $\alpha := \alpha'$.

\end{itemize}

\end{itemize}

The resulting tableau is denoted $j^\alpha(P)$.
\end{Def}

\begin{Def}[{\cite[Definition 3.9.1]{Sa}}] \label{def-delta}
For any tableau $Q$, let $m$ be the minimal element of $Q$.
Then the \textbf{delta operator} applied to $Q$ yields a new tableau, $\Delta Q$, given by performing the following steps:
\begin{itemize}

\item Erase $m$ from its cell, $\alpha$, in $Q$.

\item Perform the slide $j^\alpha$ on the resultant tableau.

\end{itemize}
\end{Def}

\begin{Def}[{\cite[Definition 3.9.1]{Sa}}] \label{def-evacuation}
For any standard Young tableau $Q$ on $n$ elements, we define the \textbf{evacuation tableau}, $\epsilon(Q)$, as the vacating tableau for the sequence 
$$Q, \Delta Q, \Delta^2 Q, \dots, \Delta^n Q.$$
That is, the $d$th cell of $\epsilon(Q)$ contains $n - i$ if cell $d$ was vacated when passing from $\Delta^i Q$ to $\Delta^{i+1}Q$.
\end{Def}

Again, we return to our previous example to demonstrate evacuation.

\begin{Exa} \label{exa-eva}
Consider $Q(52314)$, which was calculated in Example \ref{exa-RSK}.\\
\underline{Step 1:} Cell (3,1) is vacated.
$$ \Delta Q = \young(235,4) \qquad \epsilon(Q) = \young(\hfil\hfil\hfil,\hfil,5)$$
\underline{Step 2:} Cell (1,3) is vacated.
$$ \Delta^2 Q = \young(35,4) \qquad \epsilon(Q) = \young(\hfil\hfil4,\hfil,5)$$
\underline{Step 3:} Cell (2,1) is vacated.
$$ \Delta^3 Q = \young(45) \qquad \epsilon(Q) = \young(\hfil \hfil 4,3,5)$$
\underline{Step 4:} Cell (1,2) is vacated.
$$ \Delta^4 Q = \young(5) \qquad \epsilon(Q) = \young(\hfil 24,3,5)$$
\underline{Step 5:} The final cell (1,1) is vacated.
$$ \Delta^5 Q = \emptyset \qquad \epsilon(Q) = \young(124,3,5)$$
\end{Exa}

Immediately, we see that $\epsilon(Q(w))^T = Q(w)$ for $w = 52314$, as expected from $Q(w) = Q(w^r)$.
To prove Theorem \ref{main-theorem}, we first construct three sets that will be used to great effect in Section 4.

\begin{Def} \label{def-R-H-M}
Define two sets of permutations:
$$R_n=\{ w\in \mathfrak{S}_n \ : \ Q(w)=Q(w^r)  \}$$
and 
$$H_n=\{ w\in \mathfrak{S}_n \ : \ Q(w)\text{ is of symmetric hook shape}  \},$$
where a symmetric standard Young tableau $T$ is one that shares the same shape with its transpose and a hook shape tableau has an underlying Young diagram of shape $\mu = (k,1^{n-k}), \ k < n$.
Note that $H_n$ is empty for all even $n$, as there are no symmetric hook shape tableaux for even $n$.

Swapping to sets of standard Young tableaux, for all shapes $\lambda \vdash n$, define the sets
$$M_n^\lambda= \{P \in \text{SYT}(\lambda) | \epsilon(P)^T = P\}.$$
\end{Def}

We now split Theorem \ref{main-theorem} into two parts.
Theorem \ref{hooks}, combined with the fact that $H_n$ is empty for all even $n$, sheds light on some of the subtleties of the main result, specifically the dependence on the parity of $n$.

\begin{Thm} \label{hooks}
For all $n \geq 1$, the set $R_n$ is a subset of $H_n$, forcing $Q(w)$ to have a symmetric hook shape for all $w \in R_n$.
\end{Thm}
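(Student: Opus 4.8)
The plan is to exploit the relation $Q(w) = \epsilon(Q(w^r))^T$ stated in the introduction. If $w \in R_n$, then $Q(w) = Q(w^r)$, so substituting gives $Q(w) = \epsilon(Q(w))^T$; that is, $Q(w) \in M_n^\lambda$ where $\lambda$ is the shape of $Q(w)$. The first thing I would extract from this is that $\lambda$ must be a symmetric shape: since transposition and evacuation both preserve the underlying shape up to transposition, $\epsilon(Q(w))$ has shape $\lambda$, so $\epsilon(Q(w))^T$ has shape $\lambda^T$, and the equation $Q(w) = \epsilon(Q(w))^T$ forces $\lambda = \lambda^T$. Thus the real content of the theorem is not symmetry of the shape (which is automatic) but the much stronger claim that $\lambda$ is a \emph{hook}.

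To prove the shape is a hook, I would argue by contradiction: suppose $\lambda = \lambda^T \vdash n$ is symmetric but \emph{not} a hook, and show that no self-evacuating-transpose tableau of shape $\lambda$ can arise as $Q(w)$ for a $w$ with $Q(w)=Q(w^r)$, or more directly that $M_n^\lambda$ is empty for such $\lambda$. A symmetric non-hook shape must contain the cell $(2,2)$ (the $2\times 2$ box), since any symmetric diagram that omits $(2,2)$ is forced to be a hook. So the heart of the matter is to analyze where small entries sit. I would track the positions of the entries $1$ and $2$ and of the entry $n$ under the operation $P \mapsto \epsilon(P)^T$. The entry $1$ always occupies cell $(1,1)$ of any standard tableau; under evacuation, $\epsilon(P)$ places $n$ in the cell $(1,1)$ was first vacated into, and transposition then moves entries across the diagonal. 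Chasing the constraint $\epsilon(P)^T = P$ should pin down relationships forcing all of row $1$ and column $1$ to absorb the entries, contradicting the presence of cell $(2,2)$.

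Concretely, the cleanest route I foresee uses the standard fact that evacuation reverses the reading-word structure and that the descent set behaves predictably: for a standard Young tableau, $i$ is a descent of $P$ iff $i+1$ lies weakly below and strictly left of $i$, and evacuation sends the descent set of $P$ to its complement-reversal, $\{n-i : i \in \mathrm{Des}(P)\}$, while transposition complements the descent set. Combining these, the condition $\epsilon(P)^T = P$ imposes a self-duality on $\mathrm{Des}(P)$ that, together with the shape being forced into a hook, should be exactly the second bullet of Theorem \ref{main-theorem} (the statement that $i$ in the first row forces $n-i+2$ into the first column). For the present Theorem \ref{hooks}, I only need the shape conclusion, so I would isolate the part of this descent/evacuation bookkeeping that rules out $(2,2) \in \lambda$.

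The main obstacle I anticipate is making the jump from ``$\lambda$ symmetric'' to ``$\lambda$ a hook'' rigorous, since this is where the combinatorics of evacuation on a two-dimensional region (rather than just along the first row and column) genuinely enters. Reasoning purely about descent sets may not suffice, because many symmetric non-hook shapes do admit self-dual descent sets; the obstruction must come from the interaction of evacuation with the \emph{filling}, not just the shape. I therefore expect the cleanest argument to go through the $\Phi_n$/$\theta_n$ machinery promised for Section \ref{theta}, peeling off extremal entries (the entry $1$ and the entry $n$, which evacuation swaps) and inducting on $n$: removing the outer hook of a symmetric tableau and checking that self-evacuating-transpose is preserved under this reduction, so that the non-hook case collapses to a smaller forbidden configuration. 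Setting up this induction so that the base case and the preservation step are both clean is the part I would spend the most care on.
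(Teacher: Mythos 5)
Your reduction is sound as far as it goes: for $w\in R_n$ the identity $Q(w)=\epsilon(Q(w^r))^T$ makes symmetry of the shape automatic, and the whole content is ruling out the cell $(2,2)$. But the proposal stops short of an argument at exactly that decisive point, and both routes you float are problematic. The descent-set route fails for the reason you yourself suspect (and contains a small error: evacuation \emph{reverses} the descent set, $\mathrm{Des}(\epsilon(P))=\{n-i : i\in\mathrm{Des}(P)\}$, while transposition complements it, so $\epsilon(P)^T=P$ only imposes $i\in\mathrm{Des}(P)\iff n-i\notin\mathrm{Des}(P)$, a condition many tableaux of symmetric non-hook shape satisfy); descent sets cannot detect hookness. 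Your reformulation ``show $M_n^\lambda=\emptyset$ for symmetric non-hook $\lambda$'' is not a reduction either: it is equivalent to the theorem, and in the paper it appears \emph{after} Theorem \ref{hooks} as Corollary \ref{M-empty}, deduced from it (any $P\in M_n^\lambda$ is $Q(w)$ for some $w$, and then $Q(w^r)=\epsilon(Q(w))^T=Q(w)$ puts $w\in R_n$), so invoking it as a lemma would be circular. Finally, your inductive sketch peels the wrong objects: the paper's $\theta_n$ removes the first and last \emph{letters} of the permutation (arbitrary values $a,b$), not the entries $1$ and $n$, and there is no tableau-level operation of ``removing the outer hook'' compatible with evacuation.

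What is genuinely missing are the two lemmas that make the induction run. Going down: deleting the first letter of a permutation corresponds to the delta operator on the recording tableau \cite[Proposition 3.9.3]{Sa}, which is what proves $\theta_n(R_{n+2})\subseteq R_n$ (Lemma \ref{theta-preserves-R}), whence $R_n\subseteq\bigcup_{a,b}\phi_{a,b}(R_{n-2})$ (Lemma \ref{phi-preserves-R}); your phrase ``checking that self-evacuating-transpose is preserved under this reduction'' is precisely this step, and it needs the jeu-de-taquin input, not just bookkeeping. Going back up, the quantitative heart (Lemma \ref{phi-preserves-symmetry}): since $\phi_{a,b}$ preserves all relative orders, a $w\in H_m$ ($m$ odd) has longest increasing and decreasing subsequences of length exactly $\frac{m+1}{2}$, so by Schensted \cite[Theorem 3.3.2]{Sa} the first row and first column of $Q(\phi_{a,b}(w))$ each have length at least $\frac{m+1}{2}$; if both grew to $\frac{m+3}{2}$ the shape would be exactly the symmetric hook in $H_{m+2}$, and otherwise a symmetric shape on $m+2$ cells would have to place its two cells outside the first row and column asymmetrically --- impossible. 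Combined with the observation that $w\in R_{m+2}$ forces $Q(w)$ to have symmetric shape (via $P(w)=P(w^r)^T$), induction in steps of two with bases $R_1=H_1$ and $R_2=\emptyset$ closes both parities, the even case falling out because $\theta$ would map a nonempty $R_{2\ell}$ into the empty $R_{2\ell-2}$. Your proposal points at the right machinery but supplies neither of these mechanisms, and the first route it develops in any detail is one that provably cannot work.
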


Finally, Theorem \ref{count-stuff} --- previously a conjecture --- is what began our research direction.

\begin{Thm} \label{count-stuff}
The cardinality of the set $R_n$ is given by
$$|R_n| =
\begin{dcases}
	2^{\frac{n-1}{2}}{n-1 \choose \frac{n-1}{2}} & n \text{ odd} \\
	0 & n \text{ even} \\
\end{dcases}.$$
\end{Thm}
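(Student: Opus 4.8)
The plan is to count $R_n$ through the recording tableau, exploiting that RSK is a bijection. Since $Q(w^r) = \epsilon(Q(w))^T$, a permutation $w$ lies in $R_n$ if and only if its recording tableau is self-dual, $\epsilon(Q(w))^T = Q(w)$; that is, $Q(w) \in M_n^\lambda$ where $\lambda$ is its shape. Because RSK is a bijection, for a fixed standard tableau $Q$ of shape $\lambda$ the number of permutations with recording tableau $Q$ equals the number of admissible insertion tableaux, namely $f^\lambda$. Partitioning $R_n$ according to its recording tableau therefore gives
$$|R_n| = \sum_{\lambda \vdash n} |M_n^\lambda| \, f^\lambda.$$
By Theorem \ref{hooks}, every $w \in R_n$ has a symmetric hook recording tableau, so only symmetric hook shapes $\lambda$ contribute. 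For even $n$ there is no symmetric hook shape, the sum is empty, and $|R_n| = 0$, as claimed.

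For odd $n$ the unique symmetric hook shape is $\lambda = (\tfrac{n+1}{2}, 1^{\frac{n-1}{2}})$, whose arm and leg each contain $\tfrac{n-1}{2}$ boxes beyond the corner. A standard filling of a hook is determined by which of $\{2,\dots,n\}$ occupy the arm, so $f^\lambda = {n-1 \choose \frac{n-1}{2}}$. It then remains only to show $|M_n^\lambda| = 2^{\frac{n-1}{2}}$, after which the product yields the stated formula.

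To count $M_n^\lambda$ I would encode each hook tableau by its descent set. Such a $P$ is determined by its leg set $L \subseteq \{2,\dots,n\}$ (the entries below the corner), and a short check shows $\mathrm{Des}(P) = \{\ell - 1 : \ell \in L\}$; in particular $P \mapsto \mathrm{Des}(P)$ is injective on hook tableaux of a fixed shape. I then invoke two standard facts: evacuation reflects descents, $\mathrm{Des}(\epsilon(P)) = \{n - d : d \in \mathrm{Des}(P)\}$, and transposition complements them, $\mathrm{Des}(P^T) = [n-1] \setminus \mathrm{Des}(P)$. Since $\epsilon(P)^T$ is again a hook tableau of the same symmetric shape $\lambda$, injectivity reduces the self-duality condition $\epsilon(P)^T = P$ to the descent-set identity $\mathrm{Des}(P) = [n-1] \setminus \{n - d : d \in \mathrm{Des}(P)\}$.

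Writing $D = \mathrm{Des}(P)$, this asserts that $D$ and its reflection $\{n - d : d \in D\}$ partition $[n-1]$: for each $d \in [n-1]$ exactly one of $d, n-d$ lies in $D$. The map $d \mapsto n - d$ is a fixed-point-free involution on $[n-1]$ for odd $n$, splitting $[n-1]$ into $\tfrac{n-1}{2}$ two-element orbits, and the condition says precisely that $D$ selects one element from each orbit. Hence there are $2^{\frac{n-1}{2}}$ admissible descent sets, each giving a unique $P \in M_n^\lambda$, so $|M_n^\lambda| = 2^{\frac{n-1}{2}}$ and $|R_n| = 2^{\frac{n-1}{2}} {n-1 \choose \frac{n-1}{2}}$. (Reassuringly, $d \in D \iff n-d \notin D$ translates back into exactly the property that $i$ in the first row forces $n-i+2$ into the first column.) I expect the main obstacle to be the reduction step: establishing that a hook tableau is recovered from its descent set and then cleanly combining this with the evacuation and transposition descent identities. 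One should also keep in mind that Theorem \ref{hooks} is genuinely needed here, since for non-hook symmetric shapes a tableau is not determined by its descent set and this streamlined argument would no longer apply.
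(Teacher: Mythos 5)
Your proof is correct, and its global skeleton matches the paper's: both decompose $R_n$ by the recording tableau, use the RSK bijection to get $|R_n| = |M_n^\lambda| f^\lambda$ with $\lambda = (\frac{n+1}{2}, 1^{\frac{n-1}{2}})$ the unique symmetric hook, and invoke Theorem \ref{hooks} to kill all other shapes (and the even case). Where you genuinely diverge is in proving $|M_n^\lambda| = 2^{\frac{n-1}{2}}$. The paper does this in two steps (Lemmas \ref{first-row-M} and \ref{size-M}): it analyzes the evacuation algorithm directly on hook tableaux, tracking where the $\Delta$-slides vacate cells, to show $P \in M_n^\lambda$ iff $i$ in the first row forces $n-i+2$ into the first column, and then counts placements of $2,\dots,\frac{n+1}{2}$. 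You instead encode a hook tableau by its descent set $D$ (valid: the leg set is $\{d+1 : d \in D\}$, so descents are injective on hooks of fixed shape), and reduce $\epsilon(P)^T = P$ to $D = [n-1] \setminus \{n-d : d \in D\}$ via two standard identities, $\mathrm{Des}(\epsilon(P)) = \{n - d : d \in \mathrm{Des}(P)\}$ and $\mathrm{Des}(P^T) = [n-1] \setminus \mathrm{Des}(P)$; the fixed-point-free pairing $\{d, n-d\}$ on $[n-1]$ for odd $n$ then makes the count $2^{\frac{n-1}{2}}$ transparent. Your route is slicker and correctly recovers the paper's first-row/first-column condition as a byproduct rather than a lemma, but it outsources work: the transposition identity is elementary, while the evacuation--descent reflection is standard yet not among the paper's cited facts. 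It does follow from the paper's own displayed identity $Q(w^{rc}) = \epsilon(Q(w))$ together with $\mathrm{Des}(Q(w)) = \mathrm{Des}(w)$, so the gap is one of citation rather than substance; the paper's hands-on jeu de taquin computation keeps everything self-contained, at the cost of a longer argument. Your closing observation is also apt: the descent-set encoding is special to hooks, which is exactly why Theorem \ref{hooks} must come first in both treatments.
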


\section{The Map(s) $\phi_{a,b}$ and Their Left Inverse} \label{theta}

To begin proving Theorem~\ref{hooks}, we define a family of maps that take permutations on $n$ letters to permutations on $n+2$ letters.

\begin{Def} \label{def-phi-a-b}
Pick two elements $a, b \in [n+2]$ such that $a \neq b$.
Let $c = \min\{a,b\}$ and $d = \max\{a,b\}$.
Then we define the map $\phi_{a,b} : \mathfrak{S}_n \mapsto\mathfrak{S}_{n+2}$, given by 
\begin{equation*}
	\phi_{a,b}(w)_{i+1} = \begin{cases}
		a & \text{ if $i +1 = 1$} \\
		w_i & \text{ if $w_i < c$}\\
		w_i + 1 & \text{ if $c \leq w_i < d-1$}\\
		w_i + 2 & \text{ if $d-1 \leq w_i$} \\
		b & \text{ if $i +1 = n+2$} 
	\end{cases}
\end{equation*}
for all $i+1 \in [n+2]$.
\end{Def}

We first show that the maps are well-defined.

\begin{Lem} \label{phi-well-defined}
If $a \neq b$, then the map $\phi_{a,b} : \mathfrak{S}_n \mapsto\mathfrak{S}_{n+2}$ is well-defined for all $a,b \in [n+2]$.
\end{Lem}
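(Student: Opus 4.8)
The plan is to reduce the claim to a statement about a single order-preserving ``insertion'' map on values. Writing $c = \min\{a,b\}$ and $d = \max\{a,b\}$, I first isolate the function $f \colon [n] \to [n+2]$ that governs the interior entries,
\[
f(x) = \begin{cases} x & x < c, \\ x+1 & c \le x < d-1, \\ x+2 & d-1 \le x, \end{cases}
\]
so that $\phi_{a,b}(w)$ is the word $a,\, f(w_1),\, \dots,\, f(w_n),\, b$. Proving that $\phi_{a,b}(w)$ lies in $\mathfrak{S}_{n+2}$ then amounts to showing that its $n+2$ entries are exactly the set $[n+2]$, each occurring once.

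First I would check that the three cases defining $f$ partition $[n]$: since $a \ne b$ forces $c < d$, hence $c \le d-1$, the thresholds $c$ and $d-1$ are ordered, and every integer is either below $c$, in the half-open interval $[c, d-1)$, or at least $d-1$. These three conditions are mutually exclusive and exhaustive, so $f$ is well-defined as a function. Next I would verify that $f$ is injective with image $[n+2] \setminus \{c,d\}$: the three branches send $[1,c-1]$, $[c,d-2]$, and $[d-1,n]$ respectively to $[1,c-1]$, $[c+1,d-1]$, and $[d+1,n+2]$, which are pairwise disjoint and whose union is precisely $[n+2] \setminus \{c,d\}$. In particular $f$ never exceeds $n+2$ (its maximum value is $f(n) = n+2$), so it genuinely lands in $[n+2]$.

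With this in hand the conclusion is immediate: because $w$ is a permutation, $\{w_1,\dots,w_n\} = [n]$, so $\{f(w_1),\dots,f(w_n)\} = [n+2]\setminus\{c,d\}$, each value attained once; adjoining the two endpoint entries $a$ and $b$, which together form the set $\{c,d\}$ and are distinct by hypothesis, recovers all of $[n+2]$ with no repetitions. Hence $\phi_{a,b}(w) \in \mathfrak{S}_{n+2}$.

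I expect the only real obstacle to be the bookkeeping in the degenerate case $d = c+1$, where the middle branch $c \le x < d-1$ is vacuous; here I would confirm that the image computation still yields $[n+2]\setminus\{c,c+1\}$ and that no off-by-one overlap occurs between the first and third branches. Once these boundary checks are dispatched, the argument is a routine verification that the ``shift by two, skipping $c$ and $d$'' map is a bijection onto the complement of $\{c,d\}$.
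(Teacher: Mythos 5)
Your proof is correct, and it takes a slightly different route than the paper's. The paper never computes the image of the interior map exactly: it first observes that the three branches preserve strict order ($w_i < w_j$ implies $\phi_{a,b}(w)_{i+1} < \phi_{a,b}(w)_{j+1}$), so the middle entries are pairwise distinct, and then checks that each branch lands strictly below $c$, strictly between $c$ and $d$, or strictly above $d$, so no middle entry collides with the endpoint entries $a$ and $b$; pairwise distinctness of $n+2$ integers lying in $[n+2]$ then forces a permutation, with the containment in $[n+2]$ left implicit. You instead isolate the interior map $f$ and show by an exact interval computation that it is a bijection of $[n]$ onto $[n+2]\setminus\{c,d\}$, after which adjoining the endpoints $\{a,b\}=\{c,d\}$ finishes the argument. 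The underlying three-case analysis is the same, but each packaging buys something: your version is marginally more self-contained, since it makes the bound $f(x)\le n+2$ and the degenerate case $d=c+1$ explicit where the paper glosses them; the paper's choice pays off later, because the order-preservation fact extracted from its proof is recorded in Remark \ref{rmk-phi} and reused in Lemma \ref{phi-preserves-symmetry}, whereas your image computation, while cleaner for this one lemma, would not supply that later ingredient directly.
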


\begin{proof}

First, for $i, j \in [n]$ we can clearly see that $w_i < w_j$ implies that $\phi_{a,b}(w)_{i+1} < \phi_{a,b}(w)_{j+1}$.
Hence $\phi_{a,b}(w)_{i+1} = \phi_{a,b}(w)_{j+1}$ if and only if $w_i = w_j$ for any $i,j \in [n]$.
If $w_i < c$, then $\phi_{a,b}(w)_{i+1} < c$.
If $c \leq w_i < d - 1$, then $c < \phi_{a,b}(w)_{i+1} < d$.
If $d-1 \leq w_i$, then $d < \phi_{a,b}(w)_{i+1}$.
Thus 
$$\phi_{a,b}(w)_1 \neq \phi_{a,b}(w)_{n+2},$$
$$ \phi_{a,b}(w)_{i+1} \neq \phi_{a,b}(w)_{1} ,$$
$$\phi_{a,b}(w)_{i+1} \neq \phi_{a,b}(w)_{n+2} ,$$
$$\text{and } \phi_{a,b}(w)_{i+1} \neq \phi_{a,b}(w)_{j+1}$$
for any $i,j \in [n]$ with $i \neq j$.
Therefore, the map $\phi_{a,b}$ is a well-defined set function from $\mathfrak{S}_n$ to $\mathfrak{S}_{n+2}$.

\end{proof}

\begin{Exa} \label{exa-phi}
We list some examples of the action of different $\phi_{a,b}$ on $52314 \in \mathfrak{S}_5$.
\begin{itemize}

\item $\phi_{1,2}(52314) = 1745362$

\item $\phi_{1,7}(52314) = 1634257$

\item $\phi_{5,3}(52314) = 5724163$

\item $\phi_{3,5}(52314) = 3724165$

\end{itemize}

As we saw previously, the permutation $52314$ belongs to $R_5$.
The permutation $\phi_{1,2}(52314)$ belongs to $R_7$, but $\phi_{1,7}(52314)$ does not.
This relation between $R_n$ and $R_{n+2}$ is explored in Lemma \ref{phi-preserves-R}.

\end{Exa}

Now that we have seen the maps in action, we can speak more of their properties.

\begin{Rmk} \label{rmk-phi}

Let $\Phi_n$ denote the family of maps $\phi_{a,b}$ from $\mathfrak{S}_n$ to $\mathfrak{S}_{n+2}$.
\begin{itemize}
	\item Each map $\phi_{a,b}$ is injective.
	
	\item In general, the maps $\phi_{a,b}$ are not group homomorphisms, as $\phi_{a,b}$ only sends the identity element to the identity element when $a = 1$ and $b = n+2$. However, the map $\phi_{1, n+2}$ is a group monomorphism. 
	
	\item If $\phi_{a,b}(w) = \phi_{x,y}(w)$ for any $w \in \mathfrak{S}_n$, then $a = x$ and $b = y$.
	
	\item The set $\mathfrak{S}_{n+2}$ is partitioned by the images of the functions $\phi_{a,b}$.
	
	\item If $w_i < w_j$ for $i,j \in [n]$, then $\phi_{a,b}(w)_{i+1} < \phi_{a,b}(w)_{j+1}$ from the proof of Lemma \ref{phi-well-defined}, meaning all the relative orderings of $w$ are preserved.
	
\end{itemize}

\end{Rmk}

The $\Phi_n$ maps go ``up'' the symmetric groups, but we need a function to go ``down'' from $\mathfrak{S}_{n+2}$ to $\mathfrak{S}_n$.
We then construct a left inverse function shared by every $\phi_{a,b} \in \Phi_n$.

\begin{Def} \label{def-theta}
We define the map $\theta_n : \mathfrak{S}_{n+2} \mapsto\mathfrak{S}_n$, given by
\begin{equation*}
	\theta_n(w)_i = \begin{cases}
		w_{i+1} & \text{ if $w_{i+1} < c$} \\
		w_{i+1} - 1 & \text{ if $c < w_{i+1} < d$} \\
		w_{i+1} - 2 & \text{ if $d < w_{i+1}$}
	\end{cases}
\end{equation*}
for all $i \in [n]$, where $c = \min\{w_1, w_{n+2}\}$ and $d = \max\{w_1, w_{n+2}\}$.
\end{Def}

As before, we will first show that $\theta_n$ is well-defined.

\begin{Lem} \label{theta-well-defined}
The map $\theta_n : \mathfrak{S}_{n+2} \mapsto \mathfrak{S}_n$ is well-defined.
More strongly, if $i,j \in [n]$, then $w_{i+1} < w_{j+1}$ implies that $\theta_n(w)_i < \theta_n(w)_j$.

\end{Lem}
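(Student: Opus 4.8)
The plan is to recognize $\theta_n$ as nothing more than a standardization: it deletes the first and last entries of $w$ and relabels the surviving entries order-isomorphically onto $[n]$. Accordingly, I would first isolate the single-variable ``reduction'' function underlying the definition. Writing $c = \min\{w_1, w_{n+2}\}$ and $d = \max\{w_1, w_{n+2}\}$, I define $r : [n+2]\setminus\{c,d\} \to \mathbb{Z}$ by $r(v) = v$ if $v < c$, $r(v) = v-1$ if $c < v < d$, and $r(v) = v-2$ if $d < v$, so that $\theta_n(w)_i = r(w_{i+1})$ for every $i \in [n]$. This reduces the whole lemma to a statement about the one function $r$.

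The key step is to check that $r$ is a strictly increasing bijection from $[n+2]\setminus\{c,d\}$ onto $[n]$. Within each of the three intervals $r$ is a translation and hence strictly increasing, so the only thing to verify is monotonicity across the two boundaries. At the first boundary the largest admissible value below $c$ is $c-1$, sent to $c-1$, while the smallest admissible value above $c$ is $c+1$, sent to $c$; since $c-1 < c$, monotonicity is preserved. Likewise at the second boundary $d-1 \mapsto d-2$ and $d+1 \mapsto d-1$, and $d-2 < d-1$. Tracking the images of the three intervals then shows that $r$ covers $[1,c-1]\cup[c,d-2]\cup[d-1,n] = [n]$, so $r$ is a strictly increasing bijection onto $[n]$. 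This boundary verification is the only real content, exactly parallel to the boundary casework in the proof of Lemma~\ref{phi-well-defined}.

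From this both claims follow immediately. The order-preservation assertion is just the statement that $r$ is increasing: if $w_{i+1} < w_{j+1}$, then $\theta_n(w)_i = r(w_{i+1}) < r(w_{j+1}) = \theta_n(w)_j$. For well-definedness, I would observe that the entries $w_2,\dots,w_{n+1}$ are precisely the elements of $[n+2]\setminus\{w_1,w_{n+2}\} = [n+2]\setminus\{c,d\}$, which is exactly the domain of $r$; since $r$ is a bijection onto $[n]$, the tuple $(\theta_n(w)_1,\dots,\theta_n(w)_n) = (r(w_2),\dots,r(w_{n+1}))$ consists of $n$ distinct values exhausting $[n]$, whence $\theta_n(w) \in \mathfrak{S}_n$. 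I do not expect any genuine obstacle here: the argument is a short monotonicity check, and the only mild bookkeeping lies in confirming that the three image intervals tile $[n]$ with no gaps or overlaps (including the degenerate cases $c=1$ or $d=n+2$, where an outer interval is empty).
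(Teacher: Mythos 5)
Your proof is correct, and in substance it is the same elementary verification as the paper's, just packaged differently. The paper works directly with pairs of entries: it first shows $\theta_n(w)_i \in [n]$ by deriving a contradiction at the extreme values $w_{i+1} \in \{1,2\}$ and $w_{i+1} \in \{n+1,n+2\}$, then proves order preservation by case analysis on where $w_{i+1}$ and $w_{j+1}$ sit relative to $c$ and $d$ --- the only nontrivial cases being the crossing ones $w_{i+1} < c < d < w_{j+1}$ and $c < w_{i+1} < d < w_{j+1}$ --- and finishes by the implicit pigeonhole observation that $n$ distinct values in $[n]$ form a permutation. You instead isolate the one-variable relabeling map $r$ and prove it is a strictly increasing bijection of $[n+2]\setminus\{c,d\}$ onto $[n]$; your two boundary checks at $c$ and $d$ are exactly the paper's two crossing cases in disguise, and your explicit tiling $[1,c-1]\cup[c,d-2]\cup[d-1,n]=[n]$ buys surjectivity directly rather than by counting, which is marginally more information than the lemma needs but makes the ``standardization'' picture transparent. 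One cosmetic slip: when $d = c+1$ the value $c+1$ is excluded from the domain, so it is not ``the smallest admissible value above $c$'' as you state (the smallest is then $d+1$, which also maps to $c$); this creates no gap, since your interval-image bookkeeping --- the three image intervals are pairwise disjoint and in increasing order, with empty pieces allowed --- already yields monotonicity across the deleted values in every degenerate configuration, including $d=c+1$ alongside the $c=1$ and $d=n+2$ cases you flag.
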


\begin{proof}

First, we show that $\theta_n(w)_i \in [n]$ for all $i \in [n]$.
If $\theta_n(w)_i < 1$, then either $w_{i+1} =1$ or $w_{i+1} = 2$.
However, when $w_{i+1} = 1$, we must have $\theta_n(w)_i = 1$.
When $w_{i+1} = 2$, then $\theta_n(w)_i = 1$ or $2$, contradicting our assumption of $\theta_n(w)_i < 1$.
If $\theta_n(w)_i > n$, then either $w_{i+1} =n+1$ or $w_{i+1} = n+2$.
However, when $w_{i+1} = n+2$, we must have $\theta_n(w)_i = n$.
When $w_{i+1} =n+1$, then $\theta_n(w)_i = n-1$ or $n$, contradicting our assumption of $\theta_n(w)_i > n$.
Thus $\theta_n(w)_i \in [n]$ for all $i \in [n]$.

Now, if we show that $w_{i+1} < w_{j+1}$ implies $\theta_n(w)_i < \theta_n(w)_j$ for all $i,j \in [n]$, then this will prove that $\theta_n(w) \in \mathfrak{S}_n$.
Suppose $w_{i+1} < w_{j+1}$.
If 
$$w_{j+1} < c, \, c < w_{i+1} < w_{j+1} < d, \text{ or } d < w_{i+1},$$ 
then $\theta_n(w)_i < \theta_n(w)_j$, where $c = \min\{w_1,w_{n+2}\}$ and $d = \max\{w_1,w_{n+2}\}$.
If 
$$w_{i+1} < c < d < w_{j+1},$$ 
then $w_{j+1} -2 > w_{i+1}$, forcing $\theta_n(w)_i < \theta_n(w)_j$.
If 
$$c < w_{i+1} < d < w_{j+1},$$
then $w_{j+1} -2 > w_{i+1} -1$, forcing $\theta_n(w)_i < \theta_n(w)_j$.
Thus $w_{i+1} < w_{j+1}$ implies $\theta_n(w)_i < \theta_n(w)_j$ for all $i,j \in [n]$.
Therefore, $\theta_n(w)$ is a well-defined set function from $\mathfrak{S}_{n+2}\mapsto \mathfrak{S}_n$.

\end{proof}

\begin{Exa} \label{exa-theta}
Some examples of the action of $\theta_n$ are listed below.

\begin{itemize}

\item $\theta_1(231) = 1$

\item $\theta_3(52314) = 231$

\item $\theta_5(1634257) = 52314$

\end{itemize}

Note that $\theta_n$ seems to send $R_{n+2}$ to $R_n$, as every permutation, $w$, above has $Q(w) = Q(w^r)$.
This relation is further explored in Lemma \ref{theta-preserves-R}.
\end{Exa}

We can now prove that $\theta_n$ acts as a left inverse for each $\phi_{a,b} \in \Phi_n$.

\begin{Lem} \label{theta-inverse}
For all $\phi_{a,b} \in \Phi_n$ and $w \in \mathfrak{S}_n$,
$$\theta_n(\phi_{a,b}(w)) = w.$$
\end{Lem}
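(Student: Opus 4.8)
The plan is to set $v = \phi_{a,b}(w)$ and show that $\theta_n(v)_i = w_i$ for every $i \in [n]$, arguing entry by entry. The first observation that makes everything align is that $v_1 = a$ and $v_{n+2} = b$ by the definition of $\phi_{a,b}$, so that the quantities $\min\{v_1, v_{n+2}\}$ and $\max\{v_1, v_{n+2}\}$ used by $\theta_n$ coincide with the $c = \min\{a,b\}$ and $d = \max\{a,b\}$ used by $\phi_{a,b}$. This is the crucial compatibility: both maps are governed by the same pair of thresholds, so the whole argument reduces to checking that the upward shift performed by $\phi_{a,b}$ on the middle entries is exactly undone by the downward shift performed by $\theta_n$.

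Next I would fix $i \in [n]$, so that $v_{i+1}$ is computed by one of the three ``middle'' clauses of $\phi_{a,b}$. Here I would reuse the inequalities already established inside the proof of Lemma \ref{phi-well-defined}: if $w_i < c$ then $v_{i+1} = w_i < c$; if $c \leq w_i < d-1$ then $v_{i+1} = w_i + 1$ and $c < v_{i+1} < d$; and if $d - 1 \leq w_i$ then $v_{i+1} = w_i + 2$ and $d < v_{i+1}$. In particular, no middle entry $v_{i+1}$ is ever equal to $c$ or $d$, so the three cases of the definition of $\theta_n$ --- keyed on $v_{i+1} < c$, on $c < v_{i+1} < d$, and on $d < v_{i+1}$ --- are exhaustive precisely for these entries.

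With this in hand the cases match directly. In the first regime $\theta_n$ applies no shift and returns $v_{i+1} = w_i$; in the second it subtracts $1$ and returns $(w_i + 1) - 1 = w_i$; in the third it subtracts $2$ and returns $(w_i + 2) - 2 = w_i$. In every case $\theta_n(v)_i = w_i$, and since $i \in [n]$ was arbitrary, we conclude $\theta_n(\phi_{a,b}(w)) = w$.

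The argument is a direct verification, so there is no deep obstacle; the only thing requiring care is the bookkeeping of strict versus non-strict inequalities at the thresholds. The mild subtlety worth flagging is that $\phi_{a,b}$ splits its regimes at $d-1$ whereas $\theta_n$ splits at $d$, and one must confirm that the $+1$ versus $+2$ choice carries each input regime into the matching output regime. This is exactly what the inequalities borrowed from Lemma \ref{phi-well-defined} guarantee, and it is also what ensures that the middle entries never land on the forbidden values $c$ or $d$, so that $\theta_n$'s three-case definition applies to them without ambiguity.
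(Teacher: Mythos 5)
Your proof is correct and is essentially the paper's argument: a direct case-by-case verification that the shifts of $\phi_{a,b}$ and $\theta_n$ cancel, resting on the observation that $\phi_{a,b}(w)_1 = a$ and $\phi_{a,b}(w)_{n+2} = b$ make the thresholds $c,d$ agree for both maps. The only cosmetic difference is direction --- you case on the regime of $w_i$ and push forward, while the paper cases on the regime of $\phi_{a,b}(w)_{i+1}$ and pulls back --- and your explicit remark that middle entries never land on $c$ or $d$ makes the exhaustiveness of $\theta_n$'s three cases clearer than the paper leaves it.
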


\begin{proof}

From the definition of $\theta_n$, 
\begin{equation*}
	\theta_n(\phi_{a,b}(w))_i = \begin{cases}
		\phi_{a,b}(w)_{i+1} & \text{ if $\phi_{a,b}(w)_{i+1} < c$} \\
		\phi_{a,b}(w)_{i+1} - 1 & \text{ if $c < \phi_{a,b}(w)_{i+1} < d$} \\
		\phi_{a,b}(w)_{i+1} - 2 & \text{ if $d < \phi_{a,b}(w)_{i+1}$}
	\end{cases}
\end{equation*}
for all $i \in [n]$, where $c = \min\{a,b\}$ and $d = \max\{a,b\}$.
If $\phi_{a,b}(w)_{i+1} < c$, then $w_i < c$.
If $c < \phi_{a,b}(w)_{i+1} < d$, then $c \leq w_i < d$.
If $d < \phi_{a,b}(w)_{i+1}$, then $d-1 \leq w_i$.
It follows from the definition of $\phi_{a,b}$ that
\begin{equation*}
	\theta_n(\phi_{a,b}(w))_i = \begin{cases}
		w_i & \text{ if $w_i < c$} \\
		w_i & \text{ if $c \leq w_i < d$} \\
		w_i & \text{ if $d-1 \leq w_i$}
	\end{cases}
\end{equation*}
for all $i \in [n]$.
Therefore, for all $\phi_{a,b} \in \Phi_n$,
$$\theta_n(\phi_{a,b}(w)) = w.$$
\end{proof}

Because $\theta_n$ is a left inverse and the images of the $\Phi_n$ maps partition $\mathfrak{S}_{n+2}$, we immediately see that
$$\theta_n^{-1}(w) = \bigcup_{a,b \in [n+2]; \, a \neq b} \phi_{a,b}(w).$$  
Combined with the next two lemmas, this fact is instrumental in proving Theorem \ref{hooks}.

\begin{Lem} \label{theta-r-c}
For any $w \in \mathfrak{S}_{n+2}$,
$$\theta_n(w^r) = \theta_n(w)^r$$
and
$$\theta_n(w^c) = \theta_n(w)^c.$$
\end{Lem}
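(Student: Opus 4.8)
The plan is to first factor $\theta_n$ into a deletion step and a uniform renumbering step, and then to observe that both operations interact transparently with reverse and complement. Writing $c = \min\{w_1, w_{n+2}\}$ and $d = \max\{w_1, w_{n+2}\}$, define the renumbering function $g$ on $[n+2]\setminus\{c,d\}$ by $g(v) = v$ if $v < c$, $g(v) = v-1$ if $c < v < d$, and $g(v) = v-2$ if $d < v$. By Definition~\ref{def-theta} we then have $\theta_n(w)_i = g(w_{i+1})$ for all $i \in [n]$; note that the middle entries $w_2, \dots, w_{n+1}$ are precisely the elements of $[n+2]\setminus\{c,d\}$, so $g$ is never applied to $c$ or $d$ and the strict inequalities cause no trouble.

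For the reverse identity, the key point is that $\{c,d\}$ is invariant under reversal: since $(w^r)_1 = w_{n+2}$ and $(w^r)_{n+2} = w_1$, the permutation $w^r$ determines the same pair $\{c,d\}$ and hence the same renumbering function $g$. It then remains to chase indices. Using $(w^r)_{i+1} = w_{n+2-i}$, I compute $\theta_n(w^r)_i = g(w_{n+2-i})$, while $(\theta_n(w)^r)_i = \theta_n(w)_{n+1-i} = g(w_{n+2-i})$; the two sides agree for every $i \in [n]$, giving $\theta_n(w^r) = \theta_n(w)^r$.

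For the complement identity, the cutoffs themselves move: from $(w^c)_1 = n+3-w_1$ and $(w^c)_{n+2} = n+3 - w_{n+2}$ we obtain new extremes $c' = n+3-d$ and $d' = n+3-c$, with associated renumbering function $g'$. The heart of the argument is the pointwise identity $g'(n+3-v) = (n+1) - g(v)$, valid for every $v \in [n+2]\setminus\{c,d\}$. I would verify this in the three ranges: if $v < c$ then $n+3-v > d'$ and both sides equal $n+1-v$; if $c < v < d$ then $c' < n+3-v < d'$ and both sides equal $n+2-v$; if $d < v$ then $n+3-v < c'$ and both sides equal $n+3-v$. Granting this identity, I conclude $\theta_n(w^c)_i = g'((w^c)_{i+1}) = g'(n+3 - w_{i+1}) = (n+1) - g(w_{i+1}) = (n+1) - \theta_n(w)_i = (\theta_n(w)^c)_i$.

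I expect the complement case to be the main obstacle, since it requires simultaneously tracking how the defining cutoffs $c$ and $d$ transform under complementation and confirming that the three-way ``subtract $0$, $1$, or $2$'' renumbering is compatible with the value-level complement $v \mapsto n+3-v$ and the output-level complement $u \mapsto n+1-u$. The reverse case, by contrast, is nearly immediate once the invariance of $\{c,d\}$ is noted, so the bulk of the care goes into the three-case verification of $g'(n+3-v) = (n+1)-g(v)$.
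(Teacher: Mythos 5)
Your proposal is correct and takes essentially the same approach as the paper: both proofs proceed by an entrywise three-case comparison, with the reverse case reducing to the invariance of $\{c,d\}$ under swapping $w_1$ and $w_{n+2}$, and the complement case reducing to checking that the cutoffs transform as $c' = n+3-d$, $d' = n+3-c$. Your renumbering function $g$ and the identity $g'(n+3-v) = (n+1) - g(v)$ are just a compact repackaging of the paper's displayed case formulas (e.g.\ its condition $n-d+3 < n-w_{i+1}+3 < n-c+3$), so the underlying verification is identical.
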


\begin{proof}

From the definition of $\theta_n$,
\begin{equation*}
	\theta_n(w^r)_i = \begin{cases}
		w_{n-i+2} & \text{ if $w_{n-i+2} < c$} \\
		w_{n-i+2} - 1 & \text{ if $c < w_{n-i+2} < d$} \\
		w_{n-i+2} - 2 & \text{ if $d <w_{n-i+2}$}
	\end{cases},
\end{equation*}
where $c = \min\{a,b\}$ and $d = \max\{a,b\}$.
Additionally,
\begin{equation*}
	\theta_n(w)_i^r= \begin{cases}
		w_{n-i+2} & \text{ if $w_{n-i+2} < c$} \\
		w_{n-i+2} - 1 & \text{ if $c < w_{n-i+2} < d$} \\
		w_{n-i+2} - 2 & \text{ if $d < w_{n-i+2}$}
	\end{cases}.
\end{equation*}
Thus $\theta_n(w^r)_i = \theta_n(w)_i^r$ for all $i \in [n]$.
Therefore,
$$\theta_n(w^r) = \theta_n(w)^r.$$

Again,
\begin{equation*}
	\theta_n(w^c)_i = \begin{cases}
		n-w_{i+1}+3 & \text{ if $n-w_{i+1}+3 < n-d+3$} \\
		n-w_{i+1}+2 & \text{ if $n-d+3 < n-w_{i+1}+3 < n-c+3$} \\
		n-w_{i+1}+1 & \text{ if $n-c+3 < n-w_{i+1}+3$}
	\end{cases}.
\end{equation*}
We also have
\begin{equation*}
	\theta_n(w)_i^c = \begin{cases}
		n-w_{i+1}+1 & \text{ if $w_{i+1} < c$} \\
		n-w_{i+1}+2 & \text{ if $c <w_{i+1}< d$} \\
		n-w_{i+1}+3 & \text{ if $d < w_{i+1}$}
	\end{cases}.
\end{equation*}
If 
$$n-w_{i+1}+3 < n-d+3,$$ 
then $d < w_{i+1}$.
If 
$$n-d+3 < n-w_{i+1}+3 < n-c+3, $$
then $c < w_{i+1} < d$.
If $$n-c+3 < n-w_{i+1}+3,$$ 
then $w_{i+1} < c$.
Thus $\theta_n(w^c)_i  = \theta_n(w)_i^c$ for all $i \in [n]$.
Therefore,
$$\theta_n(w^c) = \theta_n(w)^c.$$

\end{proof}

\begin{Lem} \label{phi-preserves-symmetry}
Choose $\phi_{a,b} \in \Phi_n$ and $w \in H_n$ for odd $n$.
Then either $\phi_{a,b}(w) \in H_{n+2}$ or $Q(\phi_{a,b}(w))$ is not a symmetric tableau.
\end{Lem}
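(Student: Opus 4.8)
The plan is to reduce the statement to a purely combinatorial fact about the shapes of Young diagrams. Since $P(\phi_{a,b}(w))$ and $Q(\phi_{a,b}(w))$ always share the same shape, and since (Definition \ref{def-R-H-M}) a standard Young tableau is called symmetric exactly when its shape is self-conjugate, it suffices to prove: if the shape of $P(\phi_{a,b}(w))$ is self-conjugate, then it is a hook. Writing $\mathrm{sh}(T)$ for the shape of a tableau $T$, I would set $\sigma = \phi_{a,b}(w) = a\,v\,b$, where $v = \sigma_2\cdots\sigma_{n+1}$. By Remark \ref{rmk-phi} the word $v$ is order-isomorphic to $w$, so $\mathrm{sh}(P(v)) = \mathrm{sh}(P(w))$, which — because $w \in H_n$ with $n$ odd — is the self-conjugate hook $\lambda := (m, 1^{m-1})$ with $m = (n+1)/2$.

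Next I would convert the prepending of $a$ and the appending of $b$ into two successive single-cell growths. Appending a letter to a word adds exactly one cell to the shape of its insertion tableau, so $\mathrm{sh}(P(\sigma)) = \mathrm{sh}(P(av)) + (\text{one cell})$. For the prepended letter I would invoke the identity $P(u) = P(u^r)^T$ recorded in the introduction: since $(av)^r = v^r a$, we get $\mathrm{sh}(P(av)) = \big(\mathrm{sh}(P(v^r)) + (\text{one cell})\big)^T$, while $\mathrm{sh}(P(v^r)) = \lambda^T = \lambda$. Thus $\mathrm{sh}(P(\sigma))$ is obtained from the self-conjugate hook $\lambda$ by adding one cell, transposing, and adding one more cell.

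Then I would run a short shape enumeration. Letting $c_1$ be the cell first added to $\lambda$ and $c_2$ the cell added after transposing, self-conjugacy of $\lambda$ gives $\mathrm{sh}(P(\sigma)) = \lambda \cup \{c_1^T, c_2\}$, whose conjugate is $\lambda \cup \{c_1, c_2^T\}$. Self-conjugacy of the final shape therefore forces $\{c_1^T, c_2\} = \{c_1, c_2^T\}$, leaving two possibilities: either both added cells lie on the main diagonal, or the two cells form a transpose pair $\{c_1, c_1^T\}$ with $c_1$ off the diagonal. The only addable diagonal cell of $\lambda$ is $(2,2)$, and the shape $(m,2,1^{m-2})$ obtained after adding it has no further addable diagonal cell, so the first possibility cannot supply two distinct diagonal cells and is ruled out. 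In the second possibility $c_1$ must be an off-diagonal addable corner of $\lambda$, namely $(1,m+1)$ or $(m+1,1)$; either choice yields the shape $\lambda \cup \{(1,m+1),(m+1,1)\} = (m+1, 1^m)$, which is precisely the self-conjugate hook of size $n+2$. Hence any self-conjugate outcome is this hook, so $\phi_{a,b}(w) \in H_{n+2}$; otherwise the shape is not self-conjugate and $Q(\phi_{a,b}(w))$ is not symmetric.

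The main obstacle I anticipate is the middle step: justifying cleanly that prepending $a$ corresponds, after conjugation, to a single-cell growth of $\mathrm{sh}(P(v^r))$, and keeping consistent track of which intermediate shape each cell is added to under the transpose. A secondary point requiring care is the treatment of small $m$ (in particular $m = 1$, where $\lambda$ has no addable diagonal cell at all); there the same dichotomy holds and simply removes cases, so the conclusion is unaffected.
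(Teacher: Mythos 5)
Your proof is correct, and it takes a genuinely different route from the paper's. The paper argues via Schensted's theorem on monotone subsequences \cite[Theorem 3.3.2]{Sa}: since $\phi_{a,b}$ preserves relative order, the longest increasing and decreasing subsequences of $\phi_{a,b}(w)$ each have length at least $\frac{n+1}{2}$; symmetry of $Q(\phi_{a,b}(w))$ forces these lengths to be equal, and if $\phi_{a,b}(w)\notin H_{n+2}$ they must both equal $\frac{n+1}{2}$, so the two new cells land outside the first row and column, which is incompatible with a symmetric shape of size $n+2$. You instead track the shape growth exactly: writing $\phi_{a,b}(w)=avb$, you convert the prepended $a$ into an ordinary insertion via $P(u)=P(u^r)^T$ and $(av)^r=v^ra$, obtaining $\mathrm{sh}(P(\sigma))=\lambda\cup\{c_1^T,c_2\}$ with $c_1$ addable to $\lambda$ and $c_2$ addable to $\lambda+c_1^T$, and then enumerate the self-conjugate outcomes (your case split is exhaustive since the two added cells are distinct, and your case A is correctly killed because $(2,2)$ is the unique addable diagonal cell of the hook and $\lambda+(2,2)$ has none). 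This buys you strictly more than the paper proves: you show the resulting shape always contains $\lambda$ and differs from it by two boxes of a constrained form, which in particular recovers the four possible shapes listed in the paper's example following Lemma~\ref{phi-preserves-symmetry}, and it pins down the self-conjugate outcome as exactly $(m+1,1^m)$ rather than ruling out alternatives by a counting contradiction. The trade-off is that the paper's argument is shorter, leaning on a classical theorem, while yours requires the careful bookkeeping through the transpose that you flag yourself; that bookkeeping is sound as written, including the degenerate case $m=1$, so no gap remains.
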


\begin{proof}
Let $w \in H_n$.
Suppose that $\phi_{a,b}(w) \notin H_{n+2}$ for some $\phi_{a,b} \in \Phi_n$.
As $\phi_{a,b}$ preserves all the relative orderings of $w$ by Remark \ref{rmk-phi}, the longest increasing and longest decreasing subsequences of $\phi_{a,b}(w)$ are the same length or longer than the longest increasing and longest decreasing subsequences of $w$.
It is well-known that the first row and the first column of $Q(w)$ share their lengths with the longest increasing and the longest decreasing subsequenes \cite[Theorem 3.3.2]{Sa}.
Hence, the longest subsequences of $\phi_{a,b}(w)$ must have length at least $\frac{n+1}{2}$ by the symmetry of $Q(w)$.
If $Q(\phi_{a,b}(w))$ is symmetric, then the length of the longest increasing and the longest decreasing subsequences of $\phi_{a,b}(w)$ must equal each other. 
Since $\phi_{a,b}(w) \notin H_{n+2}$, the longest increasing and decreasing subsequences necessarily have length $\frac{n+1}{2}$.
Thus two cells were added to the second row or column of $Q(\phi_{a,b}(w))$, contradicting the symmetry of $Q(\phi_{a,b}(w))$.
Therefore, either $\phi_{a,b}(w) \in H_{n+2}$ or $Q(\phi_{a,b}(w))$ is not a symmetric tableau.
 
\end{proof}

As an example of the possibilities discussed in Lemma~\ref{phi-preserves-symmetry}, we give the following.

\begin{Exa}
Take this Young tableau of symmetric hook shape: 
$$\yng(3,1,1).$$
After an application of a $\phi_{a,b} \in \Phi_n$, two new cells will be added to the diagram. 
There are four possibilities --- up to transposition --- for the resulting shape:
$$\yng(4,1,1,1) \qquad \yng(4,2,1) \qquad \yng(5,1,1) \qquad \yng(3,3,1).$$
Clearly, only the tableau in $H_7$ is symmetric.
\end{Exa}

\section{Proof of Theorems \ref{hooks} and \ref{count-stuff}} \label{proof-conjectures}

To begin our proof of Theorem \ref{hooks}, we first prove the two lemmas we mentioned previously when exploring the actions of the $\Phi_n$ and $\theta_n$ maps.

\begin{Lem} \label{theta-preserves-R}
If $w \in R_{n+2}$, then $\theta_n(w) \in R_n$.
\end{Lem}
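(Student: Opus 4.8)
The plan is to show that the recording tableau of $\theta_n(w)$ is determined by $Q(w)$ alone, by the \emph{same} rule for every $w \in \mathfrak{S}_{n+2}$; the conclusion then follows formally from the defining property $Q(w) = Q(w^r)$ of $R_{n+2}$ together with Lemma \ref{theta-r-c}. First I would reduce $\theta_n$ to two elementary operations on one-line words. By Lemma \ref{theta-well-defined} the map $\theta_n$ preserves relative order, so $\theta_n(w)$ is exactly the standardization of the middle subword $w_2 w_3 \cdots w_{n+1}$ obtained by deleting the first and last letters of $w$. Since the recording tableau produced by Schensted insertion (Definition \ref{row-insertion}) depends only on the relative order of the inserted letters, we have $Q(\theta_n(w)) = Q(w_2 \cdots w_{n+1})$. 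It therefore suffices to track how $Q$ changes when the last letter and then the first letter of a word are deleted.

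Deleting the last letter is immediate: reading Definition \ref{row-insertion} one insertion at a time, the recording tableau after the first $k$ insertions is $Q$ of the length-$k$ prefix, so $Q(w_1 \cdots w_{n+1})$ is $Q(w)$ with the box containing its largest entry $n+2$ erased. Writing $d(T)$ for the operation ``erase the box holding the largest entry of the standard tableau $T$'' (always an outer corner, so $d(T)$ is again standard), this gives $Q(w_1\cdots w_{n+1}) = d(Q(w))$, a function of $Q(w)$ alone.

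Deleting the first letter is the crux, and I would handle it by passing to the reverse. For a word $v = v_1 \cdots v_m$, the reverse of $v' := v_2 \cdots v_m$ equals $v_m \cdots v_2$, that is, $v^r$ with its last letter deleted; hence $Q((v')^r) = d(Q(v^r))$. Using the relation $Q(v^r) = \epsilon(Q(v))^T$ recorded in the introduction, followed by $Q(v') = \epsilon(Q((v')^r))^T$, I obtain
$$Q(v') = \epsilon\!\left( d\!\left( \epsilon(Q(v))^T \right) \right)^{T},$$
which again depends only on $Q(v)$. Applying this with $v = w_1 \cdots w_{n+1}$ and composing with the previous paragraph yields a single deterministic map $F$ on standard Young tableaux, independent of $w$, with $Q(\theta_n(w)) = F(Q(w))$.

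Finally, for $w \in R_{n+2}$ we have $Q(w) = Q(w^r)$, so
$$Q(\theta_n(w)) = F(Q(w)) = F(Q(w^r)) = Q(\theta_n(w^r)) = Q(\theta_n(w)^r),$$
the last equality coming from Lemma \ref{theta-r-c}; therefore $\theta_n(w) \in R_n$. The main obstacle is the first-letter deletion step: rather than invoking the jeu-de-taquin/delta description directly, the efficient route is the reversal trick above, which reduces everything to the trivial last-letter deletion together with the evacuation–transpose identity already stated in the introduction. One should also verify the two supporting points used silently, namely that $d$ is well defined (the maximal entry always occupies a removable corner) and that $Q$ is invariant under standardization, so that $Q(\theta_n(w)) = Q(w_2\cdots w_{n+1})$.
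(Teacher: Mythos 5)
Your proof is correct, and its overall skeleton matches the paper's --- both reduce to showing $Q(\theta_n(w)) = Q(w_2\cdots w_{n+1})$ via the order-preservation in Lemma \ref{theta-well-defined}, both delete the two end letters at the level of recording tableaux, and both close with Lemma \ref{theta-r-c} --- but you handle the crux step, first-letter deletion, by a genuinely different device. The paper cites Sagan's Proposition 3.9.3, that deleting the first letter corresponds to the delta operator, $Q(w_2\cdots w_{n+2}) = \Delta Q(w)$; it applies $\Delta$ in parallel to $Q(w)$ and $Q(w^r)$, which are equal, and then strips the common cell containing the largest entry to get $Q(w_2\cdots w_{n+1}) = Q(w_{n+1}\cdots w_2)$. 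You avoid the jeu-de-taquin fact entirely, bootstrapping first-letter deletion from the trivial last-letter deletion via the reversal identity $Q(v') = \epsilon\bigl(d\bigl(\epsilon(Q(v))^T\bigr)\bigr)^T$, which uses only the relation $Q(v^r) = \epsilon(Q(v))^T$ already quoted in the introduction (your inversion step is justified because $v^{rr} = v$ makes $T \mapsto \epsilon(T)^T$ an involution on standard tableaux). What the paper's route buys is brevity, given Sagan as a reference; what yours buys is self-containedness --- no new external citation --- plus the explicit packaging $Q(\theta_n(u)) = F(Q(u))$ for a single tableau map $F$ valid for all $u \in \mathfrak{S}_{n+2}$, which makes the final deduction from $Q(w) = Q(w^r)$ purely formal, whereas the paper runs the two deletions in parallel on $w$ and $w^r$ and compares. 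One small point you should make explicit: the identity $Q(v^r) = \epsilon(Q(v))^T$ is stated for permutations, while $v = w_1\cdots w_{n+1}$ is merely a word with distinct letters; your closing remark that $Q$ is invariant under standardization, together with the observation that standardization commutes with reversal, covers this (the paper's appeal to Proposition 3.9.3 for the word $w_2\cdots w_{n+2}$ rests on the same kind of extension).
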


\begin{proof}

Suppose that $w \in R_{n+2}$.
Let $x = w_2 w_3 \dots w_{n+2}$ and $y = w_{n+1}w_{n}\dots w_1$.
Then $Q(x) = \Delta Q(w)$ and $Q(y) = \Delta Q(w^r)$  \cite[Proposition 3.9.3]{Sa}.
As $w \in R_{n+2}$, we have that $Q(w) = Q(w^r)$, showing that $Q(x) = Q(y)$.
Since $n+1$ occupies the same cell in $Q(x)$ and $Q(y)$, not inserting $w_{n+2}$ or $w_1$ in the respective $x$ and $y$ cases produces the same recording tableau.
Hence,
$$Q(w_2 w_3 \dots w_{n+1}) = Q(w_{n+1} w_{n} \dots w_2).$$

As $\theta_n$ preserves all the relative orderings among middle entries of permutations by Lemma \ref{theta-well-defined}, it follows that
$$Q(\theta_n(w)) = Q(w_2 w_3 \dots w_{n+1})$$
and
$$ Q(\theta_n(w^r)) = Q(w_{n+1} w_{n} \dots w_2).$$
As Lemma \ref{theta-r-c} proves $Q(\theta_n(w)^r) = Q(\theta_n(w^r)),$ the equality 
$$Q(\theta_n(w)) = Q(\theta_n(w)^r)$$
holds.
Therefore, the permutation $\theta_n(w)$ belongs to $R_n$ for all $w \in R_{n+2}$.

\end{proof}

\begin{Lem} \label{phi-preserves-R}
For all $n$, we have that
$$R_{n+2} \subseteq \bigcup_{\phi_{a,b} \in \Phi_n} \phi_{a,b}(R_{n}).$$
\end{Lem}

\begin{proof}

As $w = \phi_{a,b}(\theta_n(w))$ for all $w \in R_{n+2}$ and some $\phi_{a,b} \in \Phi_n$ dependent on $w$, Lemma \ref{theta-preserves-R} gives us that $\theta_n(w) \in R_n$.
Thus $w \in \phi_{a,b}(R_n)$.
Hence,
$$R_{n+2} \subseteq \bigcup_{\phi_{a,b} \in \Phi_n} \phi_{a,b}(R_{n}).$$

\end{proof}

\begin{Thm}[Proof of Theorem \ref{hooks}] \label{proof-conj-hooks}
For all $n \geq 1$, the set $R_n$ is a subset of $H_n$, forcing $Q(w)$ to have a symmetric hook shape for all $w \in R_n$.
\end{Thm}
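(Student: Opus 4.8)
The plan is to prove the containment $R_n \subseteq H_n$ by induction on $n$ in steps of two, treating the two parities separately, with the reduction supplied by Lemma~\ref{phi-preserves-R}: every $w \in R_{n+2}$ can be written as $w = \phi_{a,b}(v)$ for some $\phi_{a,b} \in \Phi_n$ and some $v \in R_n$. This lets me pass symmetric-hook information between level $n$ and level $n+2$, so the whole task reduces to understanding how a single application of $\phi_{a,b}$ interacts with the symmetric-hook condition, which is exactly what Lemma~\ref{phi-preserves-symmetry} records.

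The even case is immediate and pins down the parity dichotomy. Since $H_n$ is empty for even $n$, it suffices to show that $R_n$ is empty for even $n$. A direct check gives $R_2 = \emptyset$, and Lemma~\ref{phi-preserves-R} gives $R_{n+2} \subseteq \bigcup_{\phi_{a,b} \in \Phi_n} \phi_{a,b}(R_n)$; if $R_n = \emptyset$, then the right-hand side is empty, so $R_{n+2} = \emptyset$. By induction $R_n = \emptyset \subseteq H_n$ for all even $n$.

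The odd case is the substance of the argument. I would first record the ambient fact that membership in $R$ forces the recording tableau to be symmetric: from the general relation $Q(w) = \epsilon(Q(w^r))^T$ together with $Q(w) = Q(w^r)$ one gets $Q(w) = \epsilon(Q(w))^T$, and taking shapes (evacuation preserves shape while transposition conjugates it) shows the shape of $Q(w)$ is self-conjugate, i.e.\ $Q(w)$ is symmetric. For the induction I would take the base case $R_1 = \{1\} \subseteq H_1$ and assume $R_n \subseteq H_n$ for some odd $n$. Given $w \in R_{n+2}$, Lemma~\ref{phi-preserves-R} writes $w = \phi_{a,b}(v)$ with $v \in R_n \subseteq H_n$ by the inductive hypothesis. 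Now Lemma~\ref{phi-preserves-symmetry} applies to $v \in H_n$ (odd $n$) and yields the dichotomy that either $\phi_{a,b}(v) = w \in H_{n+2}$ or $Q(w)$ is not symmetric. The ambient fact rules out the second horn, so $w \in H_{n+2}$; hence $R_{n+2} \subseteq H_{n+2}$, closing the induction.

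I expect the conceptual weight to rest entirely on the lemmas already in hand rather than on any new computation: the delicate control of where the two freshly inserted cells can land (Lemma~\ref{phi-preserves-symmetry}) and the stability of $R$ under $\theta_n$ and $\phi_{a,b}$ (Lemmas~\ref{theta-preserves-R} and \ref{phi-preserves-R}) do the heavy lifting. The only genuinely new step is the observation that lying in $R_{n+2}$ forces $Q(w)$ to be symmetric, which is what collapses the two alternatives of Lemma~\ref{phi-preserves-symmetry} into the single conclusion $w \in H_{n+2}$; the main thing to be careful about is verifying the small base cases $R_1$ and $R_2$ so that both parity inductions start correctly.
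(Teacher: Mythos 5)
Your proposal is correct and follows essentially the same route as the paper: a two-parity induction driven by Lemma~\ref{phi-preserves-R}, with the symmetry of $Q(w)$ for $w \in R_n$ ruling out the second alternative of Lemma~\ref{phi-preserves-symmetry} in the odd case, and emptiness propagating in the even case (the paper invokes Lemma~\ref{theta-preserves-R} there, but your use of Lemma~\ref{phi-preserves-R} is an equivalent trivial variant). The only cosmetic difference is that you derive the symmetry of the shape of $Q(w)$ from $Q(w)=\epsilon(Q(w^r))^T$, while the paper uses $P(w)=P(w^r)^T$; both yield the same fact.
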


\begin{proof}

We split into two separate induction arguments, one odd and one even.
In the odd scenario, the base case of $n = 0 + 1$ is clear, as $R_1 = H_1 = \mathfrak{S}_1$.
As such, we assume that $R_{n-2} \subseteq H_{n-2}$ for $n = 2\ell + 1$, where $\ell$ is the integer we induct on.
If $v \in R_n$, then $P(v) = P(v^r)^T$.
Hence the shape of $Q(v)$ must be symmetric for all $v\in R_n$.
If $\phi_{a,b}(w)$ is not in $H_n$ for a $\phi_{a,b} \in \Phi_{n-2}$ and some $w \in R_{n-2}$, then the shape of $Q(\phi_{a,b}(w))$ is non-symmetric by Lemma \ref{phi-preserves-symmetry}.
This further implies that $\phi_{a,b}(w)$ is not in $R_n$.
The contrapositive gives that $\phi_{a,b}(w) \in R_n$ implies that $\phi_{a,b}(w) \in H_n$.

From Lemma \ref{phi-preserves-R}, we have that $R_{n} \subseteq \bigcup_{\phi_{a,b} \in \Phi_{n-2}} \phi_{a,b}(R_{n-2}).$
Thus every element of $R_n$ is of the form $\phi_{a,b}(w)$ for $\phi_{a,b} \in \Phi_{n-2}$ and $w \in R_{n-2}$.
Therefore $R_n \subseteq H_n$, completing the inductive step.
Induction on $\ell$ gives us the $R_n \subseteq H_n$ for all odd $n$.

In the even case, the base case of $n = 2$ is clear, as $R_2 = H_2 = \emptyset$.
The set $H_n$ is empty for all even $n$.
As such, we assume that $R_{n-2}$ is empty for $ n = 2\ell$, where $\ell$ is the integer we induct on.
If $R_n$ is non-empty, Lemma \ref{theta-preserves-R} gives us that $\theta_{n-2}(R_n) \subseteq R_{n-2}$, creating a contradiction of our inductive assumption.
Hence $R_n$ must be empty and a subset of $H_n$ as well, completing the inductive step.
Therefore $R_n \subseteq H_n$ for all $n$.

\end{proof}

\begin{Cor} \label{M-empty}

For all $\lambda \vdash n$, the set $M_n^{\lambda}$ is non-empty if and only if $\lambda$ is a symmetric hook shape.

\end{Cor}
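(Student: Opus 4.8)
The plan is to convert the emptiness question for $M_n^\lambda$ into a question about $R_n$ using the relation $Q(u)=\epsilon(Q(u^r))^T$ recorded in the introduction, and then to invoke Theorem \ref{hooks}. First I would establish the following bridge: for every $w\in\mathfrak{S}_n$, one has $w\in R_n$ if and only if $Q(w)\in M_n^{\lambda}$, where $\lambda$ is the shape of $Q(w)$. Indeed, applying the cited relation to the permutation $w^r$ (whose reverse is $w$) yields $Q(w^r)=\epsilon(Q(w))^T$, so the condition $Q(w)=Q(w^r)$ is exactly $\epsilon(Q(w))^T=Q(w)$, which is the defining condition of $M_n^{\lambda}$. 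Since RSK is a bijection, for a fixed shape $\lambda$ the map $w\mapsto Q(w)$ sends $\{w:\text{shape}(Q(w))=\lambda\}$ onto all of $\text{SYT}(\lambda)$ (each target tableau being hit $f^{\lambda}$ times). Combining, $M_n^{\lambda}\neq\emptyset$ if and only if there exists $w\in R_n$ whose recording tableau has shape $\lambda$.

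Granting this bridge, the forward implication is immediate from Theorem \ref{hooks}. If $M_n^{\lambda}\neq\emptyset$, choose $P\in M_n^{\lambda}$ and any permutation $w$ with $Q(w)=P$; by the bridge $w\in R_n$, and Theorem \ref{hooks} gives $w\in H_n$, so $P=Q(w)$ has symmetric hook shape, i.e. $\lambda$ is a symmetric hook shape. For the reverse implication I would use that for odd $n$ there is a unique symmetric hook shape, $\lambda_0=(\tfrac{n+1}{2},1^{(n-1)/2})$, and none for even $n$ (matching the emptiness of $M_n^{\lambda}$ there, which also follows from the forward direction). Because $\lambda_0$ is the only symmetric hook shape, any $w\in R_n$ automatically has $Q(w)$ of shape $\lambda_0$; thus it suffices to produce a single element of $R_n$ for odd $n$, or equivalently a single tableau in $M_n^{\lambda_0}$.

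That existence step is the main obstacle, and I expect it to require an explicit construction rather than a formal argument. The map $P\mapsto\epsilon(P)^T$ is an involution on $\text{SYT}(\lambda_0)$ (using that $\epsilon$ commutes with transpose, which is itself readable off the RSK relations listed in the introduction), but $|\text{SYT}(\lambda_0)|=\binom{n-1}{(n-1)/2}$ is typically even, so counting fixed points modulo $2$ does not force one to exist. I would therefore exhibit the concrete tableau $P_0\in\text{SYT}(\lambda_0)$ whose first row is $1,3,5,\dots,n$ and whose first column is $1,2,4,6,\dots,n-1$, and verify $\epsilon(P_0)^T=P_0$ directly; for $n=5$ this is precisely $Q(52314)=\young(135,2,4)$ from Example \ref{exa-RSK}. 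A hook shape is convenient here because each $\Delta$-slide in the evacuation travels predictably along the current arm and leg, so the computation is routine once the general pattern is set up. Alternatively, one can track a concrete permutation $w_0$ with $Q(w_0)=P_0$ and check $Q(w_0)=Q(w_0^r)$ using the same relation. Either route reduces the remaining claim to a single shape-specific calculation, after which the biconditional $M_n^{\lambda}\neq\emptyset\iff\lambda$ is a symmetric hook shape follows.
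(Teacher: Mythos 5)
Your proposal is correct, and its skeleton coincides with the paper's implicit argument: the corollary is stated without a standalone proof, and the bridge you set up --- $Q(w^r)=\epsilon(Q(w))^T$, hence $w\in R_n$ if and only if $Q(w)\in M_n^{\lambda}$ for $\lambda$ the shape of $Q(w)$ --- is exactly the equivalence the authors deploy verbatim in the proof of Theorem \ref{count-stuff}, with the only-if direction then following from Theorem \ref{hooks} just as you argue. Where you diverge is the existence half. The paper substantiates it with Lemma \ref{first-row-M}, a complete characterization of $M_n^{\lambda_0}$ for the symmetric hook $\lambda_0=(\tfrac{n+1}{2},1^{(n-1)/2})$ ($i>1$ in the first row forces $n-i+2$ into the first column), from which both nonemptiness and the exact count $2^{(n-1)/2}$ of Lemma \ref{size-M} fall out; you instead exhibit the single tableau $P_0$ with first row $1,3,5,\dots,n$ and first column $1,2,4,\dots,n-1$ and defer to a direct evacuation computation. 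Your witness is correct --- indeed it satisfies the paper's characterization, since the odd first-row entries $i\geq 3$ are sent to precisely the even first-column entries $n-i+2\in\{2,4,\dots,n-1\}$ --- so if you wish to avoid working out the pattern of $\Delta$-slides on hooks by hand, you can certify $P_0\in M_n^{\lambda_0}$ by Lemma \ref{first-row-M} instead. The trade-off is that your route is leaner (one explicit fixed point of $P\mapsto\epsilon(P)^T$ suffices for the corollary), while the paper's characterization is stronger and is needed anyway for the enumeration in Theorem \ref{count-stuff}; your observations that the parity of $|\mathrm{SYT}(\lambda_0)|=\binom{n-1}{(n-1)/2}$ blocks a fixed-point-counting shortcut, and that the even-$n$ case holds vacuously via the forward direction, are both sound.
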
 

\begin{Rmk} \label{rmk-hook}
It should be stressed that $R_n$ really is a \textit{proper subset} of $H_n$ for odd $n \geq 5$. 
In other words, there are $w \in \mathfrak{S}_n$ that are of symmetric hook shape but do not have fixed recording tableaux under the reverse operation. 
The permutation $34521$ is such an example.
\end{Rmk}

Since we have shown that $M_n^{\lambda}$ is only non-empty when $n$ is odd and $\lambda$ is a symmetric hook shape, we may restrict ourselves to those conditions and begin to examine the properties of $P \in M_n^{\lambda}$.

\begin{Lem} \label{first-row-M}
Let $\lambda = (\frac{{n + 1}}{2},1^{\frac{{n - 1}}{2}})$.
For odd $n$, the standard Young tableau $P$ belongs to $M_n^{\lambda}$ if and only if $i$ in the first row of $P$ implies that $n-i+2$ belongs to the first column of $P$ for all $i \in [n]$ with $i > 1$.
\end{Lem}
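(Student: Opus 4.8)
The plan is to reduce the tableau identity $\epsilon(P)^T = P$ to a symmetric condition on descent sets, where it becomes transparent.

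\textbf{Encoding hook tableaux by descent sets.} Since $\lambda = (\frac{n+1}{2}, 1^{\frac{n-1}{2}})$ is a hook, a tableau $P \in \mathrm{SYT}(\lambda)$ is completely determined by the set $A$ of entries lying in its first row other than the corner entry $1$, together with the complementary set $L$ of entries in its first column other than $1$; here $A \sqcup L = \{2, \dots, n\}$ and, because $\lambda$ is the symmetric hook, $|A| = |L| = \frac{n-1}{2}$. Writing $\mathrm{Des}(P) = \{i \in [n-1] : i+1 \text{ lies in a strictly lower row than } i\}$ for the descent set, I would first check the elementary fact that in a hook tableau one has $i \in \mathrm{Des}(P)$ exactly when $i+1$ lies in the first column, i.e. $\mathrm{Des}(P) = \{\, i : i+1 \in L \,\}$ (consecutive entries in the arm give ascents, an arm-to-leg step gives a descent, and leg entries increase down the column). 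In particular $P \mapsto \mathrm{Des}(P)$ is injective on $\mathrm{SYT}(\lambda)$, so any identity between hook tableaux of shape $\lambda$ may be verified on descent sets.

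\textbf{Acting by transpose and evacuation.} I would then invoke the two standard facts that, for any standard Young tableau on $n$ cells, $\mathrm{Des}(P^T) = [n-1] \setminus \mathrm{Des}(P)$ and $\mathrm{Des}(\epsilon(P)) = \{\, n - d : d \in \mathrm{Des}(P)\,\}$; the first is immediate from the definition of descent (transposing exchanges ``lower row'' with ``further column''), and the second is the classical behavior of the Schützenberger evacuation. Composing them gives $\mathrm{Des}(\epsilon(P)^T) = [n-1] \setminus \{\, n-d : d \in \mathrm{Des}(P)\,\}$. Both $P$ and $\epsilon(P)^T$ lie in $\mathrm{SYT}(\lambda)$ since $\lambda$ is self-conjugate and $\epsilon$ preserves shape, so by the injectivity from the previous step $\epsilon(P)^T = P$ holds if and only if this set equals $\mathrm{Des}(P)$; that is, if and only if for every $e \in [n-1]$ at least one of $e$ and $n-e$ lies in $\mathrm{Des}(P)$.

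\textbf{Translating back and closing.} Setting $e = i-1$ and using $\mathrm{Des}(P) = \{\, i : i+1 \in L \,\}$, the condition ``$e \notin \mathrm{Des}(P) \Rightarrow n-e \in \mathrm{Des}(P)$'' reads precisely ``$i \notin (\text{first column}) \Rightarrow n-i+2 \in (\text{first column})$'', i.e. ``$i$ in the first row implies $n-i+2$ in the first column'' for all $i>1$, which is the asserted property. To check that this one-sided implication is genuinely equivalent to the symmetric descent condition, I would observe that $i \mapsto n-i+2$ is a fixed-point-free involution of $\{2,\dots,n\}$ (here the oddness of $n$ is used), whose $\frac{n-1}{2}$ orbits are pairs; since $|A| = \frac{n-1}{2}$ equals the number of pairs, forbidding any pair from lying entirely in $A$ forces exactly one element of each pair into $A$ and the other into $L$, recovering the full ``exactly one of $e,n-e$ in $\mathrm{Des}(P)$'' condition and hence the equivalence in both directions.

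\textbf{Main obstacle.} The only nonelementary input is the evacuation descent law $\mathrm{Des}(\epsilon(P)) = \{\, n-d : d \in \mathrm{Des}(P)\,\}$; everything else is bookkeeping on hooks. If a self-contained argument is preferred, I would replace it by a direct induction that uses the delta-operator definition of $\epsilon$ to show that evacuation reverses the arm/leg word $b_2\cdots b_n$ (where $b_j$ records whether $j$ sits in the first row or the first column), which produces the same descent complementation on hook shapes.
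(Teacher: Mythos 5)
Your proof is correct, and it takes a genuinely different route from the paper's. The paper argues directly on the evacuation algorithm: for the forward implication it tracks the cell vacated in passing from $\Delta^{i-2}P$ to $\Delta^{i-1}P$ (since $i-1$ is minimal in $\Delta^{i-2}P$ and $i$, being in the first row, sits immediately to its right, the vacated cell lies in the first row, so $n-i+2$ lands in the first row of $\epsilon(P)$), and for the converse it uses that a hook tableau is determined by its first row together with a count showing the entries $n-i_j+2$ exhaust the first column. You instead encode a hook tableau by its descent set via $\mathrm{Des}(P)=\{\,i : i+1 \text{ in the first column}\,\}$, import the two classical facts $\mathrm{Des}(P^T)=[n-1]\setminus \mathrm{Des}(P)$ and $\mathrm{Des}(\epsilon(P))=\{\,n-d : d\in \mathrm{Des}(P)\,\}$, and reduce $\epsilon(P)^T=P$ to a pairing condition on $\mathrm{Des}(P)$ under $e\mapsto n-e$, which your orbit count matches to the lemma's one-sided implication; this handles both directions simultaneously and makes the role of the oddness of $n$ (fixed-point-freeness of the involution) explicit. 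Two comments. First, at the step ``that is, if and only if at least one of $e$ and $n-e$ lies in $\mathrm{Des}(P)$'' the set equality is literally ``exactly one of $e,n-e$''; the equivalence with ``at least one'' uses $|\mathrm{Des}(P)|=\frac{n-1}{2}$, which equals the number of pairs because the shape is the symmetric hook --- you do supply precisely this count in your closing paragraph, so the argument is complete, but the cardinality input belongs at that earlier step. Second, your only nonelementary ingredient, the evacuation descent law, is standard and is in effect available from identities the paper already records ($RSK(w^{rc})=(\epsilon(P(w)),\epsilon(Q(w)))$ together with $\mathrm{Des}(Q(w))=\mathrm{Des}(w)$), and your proposed self-contained fallback --- that $\Delta$ on a hook vacates the end of the arm or of the leg according to where the second-smallest entry sits, so evacuation reverses the arm/leg word --- is exactly the slide analysis the paper performs inside its forward direction. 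In short, your route buys conceptual transparency and a uniform two-way argument at the cost of quoting (or re-proving) the evacuation descent law, while the paper's route is self-contained relative to Sagan's delta-operator definitions.
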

\begin{proof}

Suppose that $P \in M_n^{\lambda}$. 
Additionally assume that $i$ is in the first row of a symmetric hook shape tableau, $P$, where $i \in [n]$ and $i > 1$.
The cell vacated when passing from $\Delta^{i-2} P$ to $\Delta^{i-1} P$ is filled with $n-i+2$ in $\epsilon(P)$.
As $i-1$ is minimal in $\Delta^{i-2} P$, it appears in the first cell of $\Delta^{i-2} P$ and $i$ is in the cell to its right.
Hence the cell vacated when passing from $\Delta^{i-2} P$ to $\Delta^{i-1} P$ will be in the first row.
Thus $n-i+2$ appears in the first row of $\epsilon(P)$ if $i$ appears in the first row of $P$.
As $P = \epsilon(P)^T$, the location of $i$ in the first row of $P$ implies that $n-i+2$ belongs to the first column of $P$ for all $i \in [n]$ with $i > 1$ whenever $P \in M_n^{\lambda}$.

Conversely, suppose that $i$ in the first row of $P$ implies that $n-i+2$ belongs to the first column of $P$ for all $i \in [n]$ with $i > 1$.
Let $1,i_2, i_3, \dots, i_{\frac{n+1}{2}}$ be the increasing sequence that forms the first row of $P$.
Thus $1, n-i_{\frac{n+1}{2}}+2, \dots, n-i_3+2, n-i_2+2$ must be the increasing sequence that forms the first column of $P$ and the first row of $\epsilon(P)$.
Similarly, the first row of $P$ and the first column of $\epsilon(P)$ must coincide.
Hence $P = \epsilon(P)^T$ whenever $i$ in the first row of $P$ implies that $n-i+2$ belongs to the first column of $P$ for all $i \in [n]$ with $i > 1$.
Therefore, for odd $n$, the standard Young tableau $P$ belongs to $M_n^{\lambda}$ if and only if $i$ in the first row of $P$ implies that $n-i+2$ belongs to the first column of $P$ for all $i \in [n]$ with $i > 1$.

\end{proof}

Returning to the previous Example \ref{exa-RSK}, we can verify that the tableau $Q(52314) \in M_5^{\lambda}$ by Lemma \ref{first-row-M}.
We now prove a final lemma that will allow us to correctly count the number of tableaux in $M_n^{\lambda}$.
This lemma will then give us the desired proof of Theorem \ref{count-stuff}.

\begin{Lem} \label{size-M}
Let $\lambda = (\frac{{n + 1}}{2},1^{\frac{{n - 1}}{2}})$.
If $n$ is odd, then
$$|M_n^{\lambda}| = 2^{\frac{n-1}{2}}.$$
\end{Lem}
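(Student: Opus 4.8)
The plan is to reduce the count to a purely combinatorial selection problem using the characterization already established in Lemma~\ref{first-row-M}. First I would record the elementary fact that a standard Young tableau of hook shape $\lambda = (\frac{n+1}{2}, 1^{\frac{n-1}{2}})$ is completely determined by the set of entries appearing in its first row: since $1$ always occupies the corner cell and entries strictly increase along both the arm and the leg of the hook, specifying which of $2, 3, \dots, n$ lie in the first row forces the arrangement of the whole tableau (the remaining entries fill the first column in increasing order). Thus $\text{SYT}(\lambda)$ is in bijection with the subsets $S \subseteq \{2, 3, \dots, n\}$ of size $\frac{n-1}{2}$, where $S \cup \{1\}$ is the first row.

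Next I would translate the defining condition of $M_n^\lambda$ through this bijection. By Lemma~\ref{first-row-M}, a tableau lies in $M_n^\lambda$ exactly when every $i > 1$ in the first row forces $n - i + 2$ into the first column; in terms of $S$ this says precisely that $i \in S$ implies $n - i + 2 \notin S$. The key observation is that the map $\iota(i) = n - i + 2$ is an involution on $\{2, 3, \dots, n\}$, and because $n$ is odd it has no fixed point (a fixed point would require $i = \frac{n+2}{2}$, which is not an integer). Hence $\iota$ partitions the $n - 1$ elements of $\{2, \dots, n\}$ into exactly $\frac{n-1}{2}$ two-element blocks $\{i, n - i + 2\}$.

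The condition ``$i \in S \Rightarrow \iota(i) \notin S$'' then says simply that $S$ meets each block in at most one element. But $|S| = \frac{n-1}{2}$ equals the number of blocks, so $S$ must in fact meet each block in exactly one element, i.e.\ $S$ is a transversal of the blocks. Choosing such an $S$ is therefore equivalent to making one independent binary choice per block, yielding $2^{\frac{n-1}{2}}$ admissible sets and hence $|M_n^\lambda| = 2^{\frac{n-1}{2}}$.

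I expect no serious obstacle here: the heavy lifting was already done in Lemma~\ref{first-row-M}, which converts the evacuation condition into the visible arm/leg statement. The only points requiring genuine care are verifying that $\iota$ is fixed-point-free --- this is exactly where the parity hypothesis on $n$ enters, guaranteeing an even number $n-1$ of elements to pair up and that $\iota$ maps $\{2,\dots,n\}$ into itself --- and confirming the counting shortcut, namely that a size-$\frac{n-1}{2}$ set avoiding every full block must be a transversal rather than merely a partial selection. Once those two points are checked, the equality with $2^{\frac{n-1}{2}}$ is immediate.
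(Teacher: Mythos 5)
Your proof is correct and follows essentially the same route as the paper: both arguments reduce the count to one independent binary choice per pair $\{i,\, n-i+2\}$, invoking Lemma~\ref{first-row-M} and the oddness of $n$ to obtain $\frac{n-1}{2}$ fixed-point-free pairs. If anything, your transversal formulation is slightly more careful than the paper's, which asserts the symmetric implication that $i$ in the first \emph{column} forces $n-i+2$ into the first \emph{row} without remarking that this follows from the pigeonhole step you make explicit (a size-$\frac{n-1}{2}$ first-row set meeting each of the $\frac{n-1}{2}$ blocks at most once must meet each exactly once).
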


\begin{proof}

For every $i$ such that $2 \leq i \leq \frac{n+1}{2}$, the symmetric hook shape forces $i$ to be placed in the first row or the first column of a tableau.
There are then $2^{\frac{n-1}{2}}$ ways for the first $\frac{n-1}{2}$ integers greater than 1 to be placed in the first row or column.
As $i$ in the first row (or column) implies that $n-i+2$ is in the first column (or row) by Lemma \ref{first-row-M}, we have that every tableau in $M_n^{\lambda}$ is uniquely determined by where the first $\frac{n-1}{2}$ integers greater than 1 are placed.
Hence, the cardinality of $M_n^{\lambda}$ is $2^{\frac{n-1}{2}}$.

\end{proof}

\begin{Thm}[Proof of Theorem \ref{count-stuff}] \label{proof-conj-count}
The cardinality of the set $R_n$ is given by
$$|R_n| =
\begin{dcases}
	2^{\frac{n-1}{2}}{n-1 \choose \frac{n-1}{2}} & n \text{ odd} \\
	0 & n \text{ even} \\
\end{dcases}.$$
\end{Thm}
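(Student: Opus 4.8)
The plan is to combine the RSK bijection with the three facts already in hand: Theorem~\ref{hooks} (which handles the even case outright), Corollary~\ref{M-empty} (which pins down the relevant shapes), and Lemma~\ref{size-M} (which counts the admissible recording tableaux). The even case is immediate: by Theorem~\ref{hooks} we have $R_n \subseteq H_n$, and $H_n = \emptyset$ for even $n$ by Definition~\ref{def-R-H-M}, so $|R_n| = 0$. All the work is in the odd case, where I would set up a bijective count and then evaluate it.

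For odd $n$, first I would translate membership in $R_n$ into a condition on the recording tableau alone. Using the relation $Q(w^r) = \epsilon(Q(w))^T$ recorded in the introduction, the defining condition $Q(w) = Q(w^r)$ is equivalent to $Q(w) = \epsilon(Q(w))^T$, which is exactly the statement that $Q(w) \in M_n^{\lambda}$ for $\lambda$ the shape of $Q(w)$ (recall $M_n^\lambda = \{P \in \mathrm{SYT}(\lambda) : \epsilon(P)^T = P\}$). Now I invoke the RSK bijection $w \mapsto (P(w), Q(w))$: since $P(w)$ and $Q(w)$ range independently over all standard Young tableaux of a common shape, fixing a recording tableau $Q$ of shape $\lambda$ leaves the insertion tableau free to be any element of $\mathrm{SYT}(\lambda)$, so exactly $f^\lambda$ permutations share that recording tableau. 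Summing the fibers over all admissible $Q$ gives
\[
|R_n| \;=\; \sum_{\lambda \vdash n} |M_n^{\lambda}|\, f^{\lambda}.
\]

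It remains to evaluate this sum. By Corollary~\ref{M-empty}, $M_n^{\lambda}$ is empty unless $\lambda$ is a symmetric hook shape, and for odd $n$ the only such shape is $\lambda = (\frac{n+1}{2}, 1^{\frac{n-1}{2}})$, so a single term survives. Lemma~\ref{size-M} supplies $|M_n^{\lambda}| = 2^{\frac{n-1}{2}}$. For the factor $f^\lambda$ I would use the elementary count of standard fillings of a hook $(k,1^{n-k})$: the entry $1$ occupies the corner, and choosing which $k-1$ of the remaining entries $\{2,\dots,n\}$ fill the first row determines the filling uniquely, so $f^{(k,1^{n-k})} = \binom{n-1}{k-1}$; with $k = \frac{n+1}{2}$ this is $\binom{n-1}{\frac{n-1}{2}}$. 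Multiplying yields $|R_n| = 2^{\frac{n-1}{2}}\binom{n-1}{\frac{n-1}{2}}$, as claimed. The substance of the argument is entirely the fiber-counting step: the genuine content is that $R_n$ fibers over the recording tableaux in $M_n^\lambda$ with each fiber of constant size $f^\lambda$, so the only real risk is a miscount there, while the final $f^\lambda$ evaluation and the plug-in of Lemma~\ref{size-M} are routine.
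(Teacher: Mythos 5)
Your proof is correct and takes essentially the same route as the paper's: both reduce $w \in R_n$ to the condition $Q(w) \in M_n^{\lambda}$ via $Q(w^r) = \epsilon(Q(w))^T$, and both count $|R_n| = |M_n^{\lambda}|\, f^{\lambda}$ by observing that each admissible recording tableau has an RSK fiber of size $f^{\lambda}$. The only cosmetic differences are that you organize the reduction as a sum over all shapes with the non-hook terms killed by Corollary~\ref{M-empty} (where the paper restricts to the unique symmetric hook shape directly from Theorem~\ref{hooks}), and that you spell out the elementary evaluation $f^{(k,1^{n-k})} = \binom{n-1}{k-1}$, which the paper simply states.
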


\begin{proof}

For even $n$, the result follows directly from Theorem \ref{proof-conj-hooks} as $R_n$ is empty.
We then focus on odd $n$.
Let $\lambda = (\frac{{n + 1}}{2},1^{\frac{{n - 1}}{2}})$.
If $w \in R_n$, then $Q(w) = Q(w^r) = \epsilon(Q(w))^T$ \cite[Theorem 3.9.4]{Sa}.
Thus $Q(w) \in M_n^{\lambda}$ for all $w \in R_n$.

Let $P \in  M_n^{\lambda}$.
The number of pairs of standard Young tableaux with $P$ as the recording tableau is equal to the number of standard Young tableaux of shape $\lambda$.
As the RSK correspondence is a bijection of permutations and pairs of standard Young tableaux, there are $f^{\lambda} = {n-1 \choose \frac{n-1}{2}}$ many $w \in \mathfrak{S}_n$ such that $Q(w) = P$.
If $Q(w) \in M_n^{\lambda}$, then $Q(w) = \epsilon(Q(w))^T =  Q(w^r)$.
Thus $w \in R_n$.
The cardinality of $R_n$ must then be $|M_n^{\lambda}|f^{\lambda}$ for all odd $n$.

Therefore,
$$|R_n| =
\begin{dcases}
	2^{\frac{n-1}{2}}{n-1 \choose \frac{n-1}{2}} & n \text{ odd} \\
	0 & n \text{ even} \\
\end{dcases}.$$
\end{proof}
Theorems \ref{proof-conj-hooks} and \ref{proof-conj-count}, along with Lemma \ref{first-row-M}, combine to prove Theorem \ref{main-theorem}.


\begin{thebibliography}{6}

\bibitem{K} D. E. Knuth, {\em Permutations, matrices, and generalized Young tableaux}, Pacific Journal of Mathematics, {\bf 34}: 709--727, 1970.

\bibitem{Ro} G. de B. Robinson, {\em On the representations of the symmetric group}, American Journal of Mathematics, {\bf60} (3): 745--760,  1938.

\bibitem{Sa} B. E. Sagan, {\em The symmetric group}, Graduate Texts in Mathematics, {\bf 203}, New York: Springer-Verlag, 2001.

\bibitem{Sc} C. Schensted, {\em Longest increasing and decreasing subsequences}, Canadian Journal of Mathematics, {\bf 13}: 179--191, 1961. 

\bibitem{M} J. L. Martin, {\em Lecture Notes on Algebraic Combinatorics}, Available online at https://jlmartin.ku.edu/ CombinatoricsNotes.pdf (updated March 12, 2021), 2021

\bibitem{L} M. V. Leeuwen, {\em The Robinson-Schensted and Schützenberger Algorithms, an Elementary Approach}, The Electronic Journal of Combinatorics, {\bf 3} (2): R15, 1996
\end{thebibliography}
\end{document}